\newtheorem{theorem}{Theorem}[section]
\newtheorem{proposition}[theorem]{Proposition}
\theoremstyle{remark}
\newtheorem{remark}[theorem]{Remark}
\newtheorem{example}[theorem]{Example}
\newcommand{\id}{\mathrm{id}}
\newcommand{\bee}[1]{\begin{equation}\label{#1}}
\newcommand{\beq}[1]{\begin{eqnarray}\label{#1}}
\newcommand{\ene}{\end{equation}}
\newcommand{\eqe}{\end{eqnarray}}
\begin{document}

\title[Lie Superautomorphisms]{Lie Superautomorphisms on Associative Algebras, II}

\author[Bahturin]{Yuri Bahturin}
\address{Department of Mathematics and Statistics\\Memorial
University of Newfoundland\\ St. John's, Canada}
\email{yuri@math.mun.ca}

\author[Bre\v{s}ar]{Matej Bre\v{s}ar}
\address{Faculty of Mathematics and Physics, 
University of Ljubljana,  and 
  Faculty of Natural Sciences and Mathematics,  University of Maribor,  Slovenia}
\email{matej.bresar@fmf.uni-lj.si}

\author[\v Spenko]{\v Spela \v Spenko}
\address{Faculty of Mathematics and Physics,
University of Ljubljana, Slovenia}
\email{spela.spenko@student.fmf.uni-lj.si}

\thanks{The first author was partially supported by
NSERC grant \# 227060-04 and URP grant, Memorial University of
Newfoundland. The second author was partially supported by ARRS grant  \#  P1-0288.}

\thanks{ 2010 {\em Mathematics Subject Classification}. 16R60, 17B40, 17B60.}

\begin{abstract}
Lie superautomorphisms of prime associative superalgebras are considered.  A definitive result is obtained for central simple  superalgebras:  their Lie superautomorphisms are of  standard forms, except when the dimension of the superalgebra in question is $2$ or $4$. 
\end{abstract}

\maketitle

\section{Introduction}

Herstein's problems on Lie homomorphisms in associative algebras \cite{Her1} were solved in a long series of papers by making use of the  theory of functional identities; see \cite{FIbook}.
In view of the growing importance of superalgebras it is natural to pose  ``Herstein's problems for superalgebras", especially since various other problems from Herstein's program relating associative, Lie and Jordan structure in associative algebras have been recently considered, by several authors, in the super setting; see, e.g., \cite{GS, LS, Mont, Z} and references therein.

% For references and more details about the history and motivation we refer the reader to the preceding paper \cite{BB2}. 

The first question one can ask is whether the description of Lie isomorphisms of prime associative algebras \cite{Bre} can be extended to superalgebras. First attempts to solve this problem were made in the recent papers \cite{BB2} and \cite{W}.  In \cite{BB2} the problem was transfered  to the non-super setting through the Grassmann envelope. However, some information is lost when making this transfer and the results obtained in this way are not optimal; also,
 \cite{BB2} deals only with superalgebras in which  the $\mathbb Z_2$-grading is induced by an idempotent. The approach in \cite{W} is based on the study of certain  functional identities in superalgebras. This more direct method, similar to the one used in the  non-super case, has a disadvantage that it does not work in superalgebras whose odd elements are algebraic of a certain bounded degree.

 In this paper we propose a third approach which in particular yields a definitive answer for central simple  associative superalgebras. Actually, mostly we consider prime superalgebras, and come quite close to  the solution in this more general context. We consider separately three cases. The first one is where the grading is induced by an idempotent. Here our proof relies heavily on Martindale's arguments from his 1969 paper \cite{Mart} on Lie isomorphisms of rings with idempotents. The second case is where the grading is induced by an X-outer automorphism, and here the results on generalized functional identities with automorphisms \cite{GFI} are applicable.  The simplest case is the third one treating the situation where the grading is induced by the exchange automorphism. Besides applying the description of Jordan homomorphisms onto prime rings, the consideration in this case is self-contained and easy. Gathering together  information obtained in the treatment of these three cases we will be able to show that 
Lie superautomorphisms of   central simple  superalgebras are of  standard forms, except in algebras of dimension $2$ or $4$. Examples showing the necessity of  these two exclusions are given. Let us point out that our results hold  in both finite-dimensional and infinite-dimensional situation.

\section{Preliminaries}

In this section we introduce the notation, recall all necessary definitions, and mention some folklore results.
We begin by    fixing a field $F$ with the only restriction that 
$$ {\rm char} (F)\ne 2.$$
 All algebras in this paper will be algebras over $F$.

 Recall that a {\em  superalgebra} is a $\mathbb Z_2$-graded (nonassociative) algebra $A$; thus, $A= A_0\oplus A_1$ with $A_iA_j \subseteq A_{i+j}$
for all $i,j\in \mathbb Z_2$. Elements from $A_i$ are said to be {\em homogeneous of degree $i$}, $i=0,1$.  For $x \in A_i$ we  write $|x|=i$. We also call elements from $A_0$ {\em even} elements, and those from $A_1$ {\em odd} elements. A linear  subspace $V$ of $A$ is said to be {\em graded} if $V = V_0\oplus V_1$ where $V_i= A_i\cap V$. If $U$ and $V$ are graded spaces, then we say that  $\varphi:U\to V$ is a {\em graded map} if $\varphi(U_i)\subseteq V_i$, $i=0,1$.

A $\mathbb Z_2$-grading of $A$ can be equivalently expressed through an automorphism $\sigma$ of $A$, $x\mapsto x^\sigma$, such that $\sigma^2 =\id$. Indeed, if $A$ is $\mathbb Z_2$-graded, then one defines $\sigma$ by 
$$(x_0+x_1)^\sigma = x_0-x_1,\quad x_i\in A_i.$$
 Conversely, given an automorphism $\sigma$ of an algebra $A$ such that  $\sigma^2 =\id$, we see that $A$ becomes a superalgebra by defining
$A_0=\{x\in A\,|\, x^\sigma = x\}$ and $A_1=\{x\in A\,|\, x^\sigma = -x\}$. Every $x\in A$ can be written as 
{$x = x_0 + x_1$, where $x_0 = \frac{1}{2}(x + x^\sigma)\in A_0$ and  $x_1 = \frac{1}{2}(x- x^\sigma)\in A_1$.
 We shall say that $\sigma$ {\em induces the grading} on $A$.

An {\em associative superalgebra} is nothing but a superalgebra which is associative as an algebra. If $A$ is such an algebra, then we define the {\em supercommutator} of two  
 homogeneous elements $x,y\in A$ by
  $$[x,y]_s = xy - (-1)^{|x||y|}yx,$$
and then extend $[\,.\,,\,.\,\,]_s$ by bilinearity to $A\times A$ (we keep the notation $[\,.\,,\,.\,\,]$ for the usual commutator in algebras). One can check that for all  homogeneous elements $x,y,z\in A$ we have
 $$   [x,y]_s=-(-1)^{|x| |y|}[y,x]_s $$
 and
  $$
(-1)^{|x||z|}[[x,y]_s,z]_s + (-1)^{|z||y|}[[z,x]_s,y]_s + (-1)^{|y||x|}[[y,z]_s,x]_s = 0
$$
 That is, the super-anti\-commutativity   and the super-Jacobi identity hold for the product $[\,.\,,\,.\,\,]_s$. By the very definition this means that  $A$, 
    endowed with this new product and together with the original grading and the original vector space structure, becomes a {\em Lie superalgebra}. If $B$ is another associative superalgebra,
    then a graded linear map $\varphi:B\to A$ is said to be a {\em Lie superhomomorphism} if it satisfies
    $$\varphi([x,y]_s) = [\varphi(x),\varphi(y)]_s\quad\mbox{for all $x,y\in B$.}$$
    There are three standard examples of such maps. The first one is a {\em superhomomorphism}: this is a usual algebra homomorphism  that is also a graded map. The second one
is the negative of a superantihomomorphism; a {\em superantihomomorphism} between superalgebras is a graded linear map $\psi$ satisfying $\psi(xy) = (-1)^{|x||y|}\psi(y)\psi(x)$ for all homogeneous elements $x$ and $y$. The third example is of a different nature. 
Recall that the {\em supercenter} of $A$ is defined as the set of all $a\in A$ such that $[a,A]_s =0$.
Every graded linear map $\tau$ from $B$ into the supecenter of $A$ that vanishes on all supercommutators $[x,y]_s$, $x,y\in B$, is obviously a Lie superhomomorphism.
%(if $A_1$ has trivial intersection with the supercenter, then $\tau$ must, as a graded map,  vanish also on $B_1$).
 Moreover, if $\tau$ is such a map and $\theta:B\to A$ is either a superhomomorphism or the negative of a superantihomomorphism, then 
$$\varphi = \theta + \tau$$ 
is a Lie superhomomorphism. We say that such  Lie superhomomorphisms are of a {\em standard form}. 
Our basic goal in this paper is to show that, under appropriate assumptions, {\em Lie superautomorphisms} of associative superalgebras are of standard forms. In Sections   \ref{secfirst} and \ref{secsecond}, however, we will deal with {\em Lie superisomorphisms} between different superalgebras, primarily only because this is more  convenient   in those settings.

 Let $Z$ be the  usual center of an associative superalgebra $A$, i.e., the center of an algebra $A$. Note that $Z$ is a graded subspace of $A$, thus $Z=Z_0\oplus Z_1$. Clearly, $Z_0$ is contained in the supercenter of $A$, and in fact quite often $Z_0$ is equal to the supercenter (see below). 
 We say that $A$ is a {\em central superalgebra} if $Z_0=F$, i.e, $A$ 
is unital and $Z_0$ consists of scalar multiplies of $1$.

 An associative superalgebra $A$ is said to be {\em simple} if $A^2\ne 0$ and $A$ has no graded ideals different from $0$ and $A$. 
 More generally, $A$ is {\em prime} if the product of any two nonzero graded ideals of $A$ is  nonzero. 
 If $A$ is simple (resp. prime) as an algebra, then it is of course also simple (resp. prime) as a superalgebra.
 The converse is not true. If $U$ is a simple (resp. prime) associative algebra, then the direct product
  $A=U\times U$  of two copies of $U$ is not prime as algebra, but it is simple (resp. prime) 
  as a superalgebra defined through the exchange automorphism $(x,y)^\sigma = (y,x)$, i.e.,  $A_0= \{(u,u)\,|\, u\in U\}\cong U$ 
  and $A_1= \{(u,-u)\,|\, u\in U\}$. Anyway, we shall mostly  consider the situation where a prime superalgebra 
  $A$ is also prime as an algebra. Let us therefore recall some standard facts  about such algebras. 
  For all details we refer the reader to the book \cite{BMMb}.

  Until further notice we assume that $A$ is a prime associative algebra. Then one can form its {\em maximal left algebra of quotients} $Q$. This is a unital prime algebra containing $A$ as its subalgebra. The center $C$ of $Q$ is a field, called the {\em extended centroid of $A$}. Of course, $C$ contains the base field $F$, and moreover, it contains the  center $Z$ of $A$. By $A_C$ we denote the subalgebra of $Q$ generated by $A$ and $C$; thus $A_C$ consists of elements of the form $\lambda_0 + \sum \lambda_ix_i$ where $\lambda_i\in C$ and $x_i\in A$. If $A$ is a simple unital algebra, then $C =Z$, and hence $A_C=A$.
  
  In our proof in Section \ref{secfirst} we shall arrive at a situation where $a,b,c \in A$ with $a\ne 0$  satisfy 
  \begin{equation}\label{eJer}
  axb = cxa \quad\mbox{for all $x\in A$.}
  \end{equation}  
This is a typical situation where the extended centroid can be  effectively used. Namely, by a well-known result by Martindale \cite[Theorem 2]{Mart2} it follows that there exists $\lambda\in C$ such that  $b=c = \lambda a$. 

  Every automorphism  of $A$ can be uniquely extended to an automorphism of $Q$ \cite[Proposition 2.5.3]{BMMb}.  Moreover, if its square is the identity, then the same holds true for this extension. Accordingly, if $A$ is a superalgebra, then so is $Q$, and $A$ is its subsuperalgebra. Further, $C$ is a graded subspace of $Q$, $C= C_0\oplus C_1$.  Therefore $A$ is also a subsuperalgebra of $A_C$. We will also deal with the  subalgebra of $Q$ generated  by $A$ and the field $C_0$; we denote it by $A_{C_0}$. Clearly, $A_{C_0}$ is a superalgebra and $A$ is its subsuperalgebra.
   Let us remark that even when dealing with the usual (not super) Lie isomorphisms between prime algebras, one cannot avoid the extended centroid and related concepts in the structure theorems. The same happens in the super setting. In particular, the term ``standard form" introduced above should be understood  somewhat loosely, by allowing $\theta$ and $\tau$ to have their ranges in $A_{C_0}$ (i.e., not necessarily in $A$).
Moreover, it is easy to see that the supercenter of a prime associative superalgebra is equal to the even part of its center (see, e.g., \cite[Lemma 1.3]{Mont}).  Therefore $\tau$, as a graded map,
must vanish on the odd part.
 
An automorphism $\sigma$ of $A$ is said to be $X$-{\em inner} if there exists an invertible element $q\in Q$ such that $x^\sigma = qxq^{-1}$ for all $x\in A$. It turns out that $q$ must necessarily lie in the symmetric Martindale algebra of quotients $Q_s$ of $A$. This is a subalgebra of $Q$ that also contains $A$, and moreover, if $A$ is a simple unital algebra, then $Q_s =A$. Therefore the concept of an $X$-inner automorphism in this case coincides with the usual concept of an inner automorphism.  
If $\sigma$ is not $X$-inner, then it is called $X$-{\em outer}.

In Section \ref{secsecond} we will consider non-GPI prime algebras, i.e., algebras that do not satisfy  generalized polynomial identities. Again we refer to \cite{BMMb} for a full account on these notions.

\section{First case: grading incuded by an idempotent} \label{secfirst}

Let $A$ be an associative algebra and $e$ an idempotent in $A$.  If we set
\begin{equation}\label{e}
A_0 =eAe + (1-e)A(1-e)\quad\mbox{ and}\quad A_1 = eA(1-e) + (1-e)Ae,
\end{equation}
 then $A$ becomes an associative superalgebra. We shall say that in such a superalgebra the {\em grading is induced by an idempotent}. This type of grading is  important and it  appears quite often  (a glance at the proof of Theorem   \ref{Tsimple} below reveals the reasons for that).
 A prototype example  is $M(p\,|\,q)$, the algebra of square matrices of order $p+q$ equipped with the following  $\mathbb Z_2$-grading: $M(p\,|\,q)_0$ consists of matrices of the form 
$\left[\begin{matrix} A & 0 \cr 0 & D \cr
\end{matrix} \right]$, $A\in M_p(F)$, $D\in M_q(F)$, and $M(p\,|\,q)_1$ consists of matrices of the form 
$\left[\begin{matrix} 0 & B \cr C & 0 \cr
\end{matrix} \right]$, $B\in M_{p,q}(F)$, $C\in M_{q,p}(F)$.

 Our goal in this section is to describe Lie superisomorphisms between prime associative (possibly infinite-dimensional) algebras whose superalgebra structure is arising from idempotents. For simple algebras satisfying some technical conditions (partially redundant, as one can see from what follows), this was done in
\cite[Corollary 3.2]{BB2}.  We will now obtain a definitive result for prime algebras, using the approach developed by Martindale \cite{Mart}. For this particular problem this approach has turned out to be  more efficient than the one based on functional identities, used in \cite{BB2}.

Although Martindale considered Lie isomorphisms (not superisomorphisms), many of his arguments make sense in the present context. A careful inspection of the proofs of Lemmas 11, 12, 13, 14, 15, 16, 18, 19, 21, and Theorems 8, 9, 10 from \cite{Mart} shows that he actually proved the following proposition, which  is applicable to both Lie isomorphisms and Lie superisomorphisms. 
Its formulation is rather technical, but this is exactly what the proofs of the aforementioned lemmas and theorems, practically without any change, show.

\begin{proposition}\label{PJerry} {\bf (Martindale)}
Let $A$ and $B$ be prime associative unital algebras, and let $\varphi:B\to A$ be a bijective linear map.  Suppose there exist nontrivial idempotents  $e\in A$ and $f\in B$ such that 
$$\varphi(f) - e \in C \quad\mbox{and}\quad \varphi(1-f) - (1-e) \in C,$$
 where $C$ is the extended centroid of $A$. Let $B_0 = fBf+(1-f)B(1-f)$. If $\varphi$ satisfies 
$$\varphi([x_0,x]) = [\varphi(x_0),\varphi(x)]\quad\mbox{for all $x_0\in B_0$, $x\in B$,}$$
then there exist linear maps $\theta:B\to A_C$ and $\tau:B\to C$ such that $\varphi =\theta + \tau$,
$$\tau(fB(1-f)) = \tau((1-f)Bf) =0,$$
$$\theta(fB(1-f))\subseteq eA(1-e),\,\,\theta((1-f)Bf)\subseteq (1-e)Ae, $$
and either 
\begin{itemize}
\item[(a)] $\theta(fBf)\subseteq eA_C e$, $\theta((1-f)B(1-f))\subseteq (1-e)A_C (1-e)$, and  for all $x_0\in B_0$, $x\in B$ we have 
$\theta(x_0x) = \theta(x_0)\theta(x)$,  $\theta(xx_0) = \theta(x)\theta(x_0)$; or
\item[(b)] $\theta(fBf)\subseteq (1-e)A_C (1-e)$, $\theta((1-f)B(1-f))\subseteq eA_C e$, and  for all $x_0\in B_0$, $x\in B$ we have 
$\theta(x_0x) = -\theta(x)\theta(x_0)$,  $\theta(xx_0) = -\theta(x_0)\theta(x)$.
\end{itemize}
\end{proposition}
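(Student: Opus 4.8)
The plan is to follow Martindale's treatment of the Peirce decomposition, verifying at each step that only commutators with an even (diagonal) argument are invoked, which is exactly what the hypothesis provides. Write $f_1=f$, $f_2=1-f$, $e_1=e$, $e_2=1-e$, and form the Peirce decompositions $B=\bigoplus_{i,j}B_{ij}$ and $A=\bigoplus_{i,j}A_{ij}$ with $B_{ij}=f_iBf_j$, $A_{ij}=e_iAe_j$; thus $B_0=B_{11}\oplus B_{22}$, while $fB(1-f)=B_{12}$ and $(1-f)Bf=B_{21}$.

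First I would localize the odd part. Taking $x_0=f\in B_0$ and $x\in B_{12}$ gives $[f,x]=x$, so the hypothesis yields $\varphi(x)=[\varphi(f),\varphi(x)]=[e,\varphi(x)]$, the central summand of $\varphi(f)$ dropping out because $C$ is central. Expanding $\varphi(x)$ in Peirce components of $A$ and computing $[e,\varphi(x)]$ block by block, one reads off (using $\ch(F)\ne 2$) that $\varphi(x)\in A_{12}$; symmetrically $\varphi(B_{21})\subseteq A_{21}$. On these two pieces I set $\theta=\varphi$ and $\tau=0$, which already delivers the stated inclusions $\theta(fB(1-f))\subseteq eA(1-e)$, $\theta((1-f)Bf)\subseteq(1-e)Ae$ and $\tau(fB(1-f))=\tau((1-f)Bf)=0$.

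Next I would treat the diagonal. Feeding commutators $[d,y]$ with $d\in B_{11}$ and $y\in B_{12}$ (again an even--arbitrary pair) into the Lie identity and comparing the Peirce components of $[\varphi(d),\varphi(y)]$ with $\varphi(dy)\in A_{12}$, the components lying in $A_{11}\oplus A_{22}$ must cancel; this forces the off-diagonal blocks of $\varphi(d)$ to annihilate $\varphi(B_{12})$, whence $\varphi(B_{11})\subseteq A_{11}\oplus A_{22}$ by primeness, and likewise for $B_{22}$. Primeness, through a relation of the form \eqref{eJer} and the scalar lemma quoted after it, then shows that the two diagonal blocks of $\varphi(d)$ are tied together by an element of $C$. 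I would absorb this central freedom into a map $\tau\colon B_0\to C$, set $\theta=\varphi-\tau$, and observe that the two admissible orientations of the diagonal---$\theta(fBf)$ landing in $eA_Ce$ versus in $(1-e)A_C(1-e)$---are precisely what produce the alternatives (a) and (b). Once an orientation is fixed, the identities $\theta(x_0x)=\pm\theta(x_0)\theta(x)$ and $\theta(xx_0)=\pm\theta(x)\theta(x_0)$ follow by expanding both sides in Peirce components and repeatedly invoking primeness to remove the extended-centroid ambiguities.

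The main obstacle, and the genuine content of the proposition, is not any single computation but the bookkeeping that assembles these block-wise identities into the clean (anti)multiplicative statements, together with the verification that the dichotomy is exhaustive. Since the claim is that Martindale's Lemmas 11--16, 18, 19, 21 and Theorems 8--10 transfer verbatim, the decisive point is to confirm that \emph{every} commutator occurring in those arguments has at least one factor in the diagonal part $B_0$; no step may secretly use the commutator $[B_{12},B_{21}]$ of two odd elements, since a Lie superisomorphism controls only the corresponding anticommutator $B_{12}B_{21}+B_{21}B_{12}$ and not that commutator. Establishing that Martindale's inductive construction of $\theta$ never strays outside this safe region of even-argument commutators is exactly the careful inspection the statement refers to, and is what lets a single proposition serve both the classical Lie-isomorphism case and the Lie-superisomorphism case.
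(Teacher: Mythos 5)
Your proposal follows essentially the same route as the paper: the paper offers no independent argument but attributes the statement to a careful inspection of Martindale's Lemmas 11--16, 18, 19, 21 and Theorems 8--10, and your sketch --- Peirce decomposition, localization of the off-diagonal blocks via $[f,x]=x$, and above all the observation that every commutator occurring in Martindale's construction has at least one factor in the diagonal part $B_0$ --- is precisely that inspection. The block computations you do carry out (e.g.\ $\varphi(fB(1-f))\subseteq eA(1-e)$ from $\varphi(x)=[e,\varphi(x)]$ and $\mathrm{char}\,F\ne 2$) are correct.
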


Let us point out that Martindale's main result on Lie isomorphisms also requires the assumption that the characteristic is different from $3$. But this is used only at one point, in the proof of \cite[Theorem 7]{Mart}. We will  derive a similar conclusion to the one from this theorem in a different manner, without any restriction on the characteristic.

\begin{theorem}\label{TJerry}
Let $A$ and $B$ be  prime associative unital algebras, and assume that $A$ and $B$ are also superalgebras with respect to  gradings  induced by idempotents. 
Let $C$ be the extended centroid of $A$ and assume that  $A_C$  is not isomorphic to $C$ or $M_2(C)$. Then 
every Lie superisomorphism $\varphi:B\to A$ is of the form $\varphi= \theta  + \tau$ where $\theta$ is either a superhomomomorphism  or the negative of a superantihomomorphism from $B$ into $A_C$, and $\tau$ is a linear map from $B$ into $C$  
satisfying $\tau([B,B]_s)  =0$.
\end{theorem}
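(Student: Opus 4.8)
The plan is to derive the theorem from Martindale's Proposition~\ref{PJerry} and then upgrade its conclusion to the super setting. Fix idempotents $f\in B$ and $e\in A$ inducing the two gradings, so $B_0=fBf+(1-f)B(1-f)$ and $A_0=eAe+(1-e)A(1-e)$. The first step is the reduction to the hypothesis of Proposition~\ref{PJerry}: for an even element $x_0$ the supercommutator collapses to the ordinary commutator, $[x_0,x]_s=[x_0,x]$, and since $\varphi$ is graded, $\varphi(x_0)$ is even, so $[\varphi(x_0),\varphi(x)]_s=[\varphi(x_0),\varphi(x)]$. Hence the Lie superisomorphism property yields $\varphi([x_0,x])=[\varphi(x_0),\varphi(x)]$ for all $x_0\in B_0$, $x\in B$, which is exactly what the proposition requires.

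Before invoking it I must supply the normalization $\varphi(f)-e\in C$. Writing $g=\varphi(f)$, the identity above with $x_0=f$ reads $\mathrm{ad}\,g\circ\varphi=\varphi\circ\mathrm{ad}\,f$, so $\mathrm{ad}\,g$ is conjugate to $\mathrm{ad}\,f$; since $\mathrm{ad}\,f$ is diagonalizable with eigenvalues $0,\pm1$ (it kills $B_0$ and acts as $\pm1$ on $fB(1-f)$ and $(1-f)Bf$), the same holds for $\mathrm{ad}\,g$, whose $0$-eigenspace is $\varphi(B_0)=A_0=C_A(e)$. I then show $g$ differs by an element of $C$ from an idempotent inducing the grading of $A$. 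Concretely, writing $g=g_1+g_2$ with $g_1=ege$ and $g_2=(1-e)g(1-e)$, the equality $\mathrm{ad}(g)^2=\mathrm{ad}(e)^2$ forces $g_1^2a-2g_1ag_2+ag_2^2=a$ for all $a\in eA(1-e)$, and invoking primeness and the extended centroid pins $g_1,g_2$ to scalar multiples of $e,1-e$ whose difference is $\pm1$. The two signs (morally $g\equiv e$ versus $g\equiv 1-e\bmod C$) are what later produce the alternatives (a) and (b).

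Applying Proposition~\ref{PJerry} now gives $\varphi=\theta+\tau$, with $\tau$ vanishing on the odd part and $\theta$ multiplicative (case (a)) or anti‑multiplicative (case (b)) on every product having an even factor, with the stated Peirce placements. Since $\theta$ sends $fBf,(1-f)B(1-f)$ into even blocks and the off‑diagonal parts into odd blocks, $\theta$ is graded, and because $\varphi=\theta$ on $B_1$ is a bijection onto $A_1$ respecting the two off‑diagonal pieces, $\theta(fB(1-f))=eA(1-e)$. The crux, and the only place where the super‑structure and the hypothesis on $A_C$ enter, is extending multiplicativity to products of two odd elements. Write $x=x_{12}+x_{21}$ with $x_{12}\in fB(1-f)$, $x_{21}\in(1-f)Bf$ (similarly for $y,z$). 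For odd $x,y$ one has $[x,y]_s=xy+yx$, hence $\varphi(xy+yx)=\varphi(x)\varphi(y)+\varphi(y)\varphi(x)$; comparing Peirce blocks shows the defect $\mu(x_{12},y_{21}):=\theta(x_{12}y_{21})-\theta(x_{12})\theta(y_{21})$ is a central multiple of $e$. On the other hand associativity, $(x_{12}y_{21})z_{12}=x_{12}(y_{21}z_{12})$, together with the multiplicativity on even factors, yields after substituting these central values a relation $s(x_{12})\theta(z_{12})=s(z_{12})\theta(x_{12})$ for a scalar function $s$. If $s\not\equiv0$ this forces $eA(1-e)=\theta(fB(1-f))$ to be one‑dimensional over $C$, whence a short primeness argument gives $A_C\cong M_2(C)$ (or $C$), against the hypothesis. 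Therefore $s\equiv0$, the defect vanishes, $\theta$ is multiplicative on all of $B$, and being graded it is a superhomomorphism in case (a); analogously it is the negative of a superantihomomorphism in case (b).

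Finally, $\tau$ maps into $C_0$, which lies in the supercenter, so $[\tau(w),z]_s=0$ for all $w,z$; hence $[\varphi(x),\varphi(y)]_s=[\theta(x),\theta(y)]_s$, while a direct check gives $\theta([x,y]_s)=[\theta(x),\theta(y)]_s$ for a superhomomorphism or for the negative of a superantihomomorphism. Comparing with $\varphi([x,y]_s)=\theta([x,y]_s)+\tau([x,y]_s)$ yields $\tau([B,B]_s)=0$, finishing the proof. I expect the odd–odd multiplicativity step to be the main obstacle: unlike in Martindale's Lie setting, the superbracket only supplies the \emph{anticommutator} of two odd elements, so extracting genuine multiplicativity from it requires both the associativity trick and the explicit exclusion of the degenerate $M_2(C)$ (and $C$) cases.
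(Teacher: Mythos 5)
Your overall strategy is exactly the paper's: check the hypotheses of Proposition~\ref{PJerry}, apply it, and then use the anticommutator identity on odd elements plus an associativity computation to show the central defect vanishes, with the excluded case $A_C\cong M_2(C)$ arising precisely where you predict. The odd--odd step is sound: the paper associates $x_{12}y_{21}z_{11}$ where you associate $x_{12}y_{21}z_{12}$, but both routes lead to $eA_Ce=Ce$ and $(1-e)A_C(1-e)=C(1-e)$ and hence to $A_C\cong M_2(C)$.

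Two points need repair. First, your normalization step rests on a false implication: the identity $g_1^2a-2g_1ag_2+ag_2^2=a$ for all $a\in eA(1-e)$ does \emph{not} by itself pin $g_1,g_2$ to scalar multiples of $e,1-e$ --- take $g_2=0$ and $g_1$ any non-scalar square root of $e$ inside $eAe$ (e.g.\ $\mathrm{diag}(1,-1)$ when $eAe\cong M_2$; the identity holds but $g_1\notin Ce$). What actually does the pinning is $[g,A_0]=0$, which you have (it is your observation that the $0$-eigenspace of $\mathrm{ad}\,g$ is $A_0$) but do not invoke at this point: from $[g,eAe]=0$ one gets $exe\,g_1=g_1\,exe$ for all $x$, an identity of type \eqref{eJer}, so Martindale's lemma gives $g_1=\alpha e$ and similarly $g_2=\beta(1-e)$; only \emph{then} does your $\mathrm{ad}^2$ computation (or, as in the paper, the action on a single nonzero odd element) force $\alpha-\beta=\pm 1$. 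Also, the two signs do not produce alternatives (a) and (b); they are absorbed by relabelling $e\leftrightarrow 1-e$, and (a)/(b) come out of Proposition~\ref{PJerry} itself. Second, Proposition~\ref{PJerry} requires \emph{nontrivial} idempotents, so your argument does not cover the case $e\in\{0,1\}$, i.e.\ $A_1=0$ (equivalently $B_1=0$, since $\varphi$ is a graded bijection); there $\varphi$ is an ordinary Lie isomorphism and one must invoke the classical description, as the paper does via \cite[Corollary 6.5]{FIbook}. You should also record $\varphi(1)\in C$ so as to obtain the second normalization $\varphi(1-f)-(1-e)\in C$ demanded by the proposition.
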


\begin{proof}
Let $e\in A$ be an idempotent inducing the grading on $A$, and $f\in B$ be an idempotent inducing the grading on $B$. Clearly, $e$ is a trivial idempotent 
(i.e., $e=0$ or $e=1$) if and only if $A_1=0$, and $f$ is trivial if and only if $B_1=0$. Further, $A_1=0$ if and only if $B_1=0$,
 and in this case the result follows from \cite[Corollary 6.5]{FIbook}.   We may therefore assume that both $e$ and $f$ are nontrivial idempotents.

We set $u =\varphi(f)$. Since $[f,B_0] =0$ it follows that $[u,A_0] = [\varphi(f),\varphi(B_0)] =0$. In particular, $[u,eAe] =0$, i.e., $ exeu =uexe $ for all $x\in A$. This is an identity of the type \eqref{eJer}. Therefore 
 $eu = \alpha e$ for some $\alpha \in C$. Similarly,  $[u,(1-e)A(1-e)] =0$ yields $(1-e)u = \beta (1-e)$ for some $\beta\in C$. Accordingly, $u = \gamma e + \beta$ with $ \gamma =\alpha - \beta$.
Pick a nonzero $b\in fB(1-f)$, and set $c\ = \varphi(b)\in A_1$. We have $c = \varphi([f,b]) = [u,c] = \gamma [e,c]$. This  yields $ece=0$. Further, multiplying $c = \gamma [e,c]$ from the left (resp. right) by $e$ we get $(1-\gamma)ec =0$ (resp.   $(1+\gamma)ce =0$). Since $c\ne 0$, we have $[e,c]\ne 0$, and therefore $ec\ne 0$ or $ce \ne 0$. Accordingly, $\gamma = 1$ or $\gamma = -1$. We have thereby proved that
either $\varphi(f) - e \in C$ or $\varphi(f) - (1-e)\in C$. Now, saying that the grading on $A$ is induced by $e$ is equivalent to saying that it is induced by $(1-e)$. We may therefore replace the roles of $e$ and $1-e$ at the very beginning, and so there is no loss of generality in assuming that the first condition, $\varphi(f) - e \in C$, holds.
Next, since $1\in B_0$ and $[1,B] = 0$, it follows that $[\varphi(1),A]=0$, and hence $\varphi(1)\in C$. Accordingly, we have 
\begin{equation} 
\nonumber
\varphi(f) - e \in C\quad\mbox{and}\quad \varphi(1-f) - (1-e) \in C.
\end{equation}
We are now in a position to use Proposition \ref{PJerry}. Thus, $\varphi= \theta+\tau$ where $\tau:B\to C$ satisfies $\tau(B_1)=0$ and $\theta$ satisfies either (a) or (b).

For each $x\in B$ we write 
$$x_{11} = fxf,\,\, x_{12} = fx(1-f),\,\, x_{21} = (1-f)xf,\,\, x_{22} = (1-f)x(1-f). $$
Note that for all $x,y\in B$ we have
\begin{equation*}
\begin{split}
\theta(x_{12}y_{21} + y_{21}x_{12}) &= \varphi(x_{12}y_{21} + y_{21}x_{12}) - \tau(x_{12}y_{21} + y_{21}x_{12})\\
&= \varphi(x_{12})\varphi(y_{21}) + \varphi(y_{21})\varphi(x_{12}) - \tau(x_{12}y_{21} + y_{21}x_{12}).
\end{split}
\end{equation*}
Since $\tau(B_1)=0$, $\varphi$ and $\theta$ coincide on $B_1$. Setting 
$$
\varepsilon(x,y) =- \tau(x_{12}y_{21} + y_{21}x_{12})\in C
$$
we can 
therefore  rewrite the last identity as
\begin{equation} \label{elas}
\theta(x_{12}y_{21} + y_{21}x_{12}) = \theta(x_{12})\theta(y_{21}) + \theta(y_{21})\theta(x_{12}) +\varepsilon(x,y).
\end{equation}

Let us first consider the case where $\theta$ satisfies (a). 
Multiplying \eqref{elas} by $e$ and using the conclusions about $\theta$  from Proposition \ref{PJerry} we obtain
\begin{equation} \label{e12}
\theta(x_{12}y_{21}) = \theta(x_{12})\theta(y_{21}) + \varepsilon(x,y)e
\end{equation}
for all $x,y\in B$. Similarly we have
\begin{equation} \label{e21}
\theta(y_{21}x_{12}) = \theta(y_{21})\theta(x_{12}) + \varepsilon(x,y)(1-e)
\end{equation}
for all $x,y\in B$. 

Since $\theta$ satisfies (a), it follows from \eqref{e12} that, on the one hand, 
$$
\theta(x_{12}y_{21}z_{11}) = \theta(x_{12}y_{21})\theta(z_{11}) = \theta(x_{12})\theta(y_{21})\theta(z_{11}) + \varepsilon(x,y)e\theta(z_{11})
$$ 
and on the other hand,
$$
\theta(x_{12}y_{21}z_{11}) = \theta(x_{12})\theta(y_{21}z_{11}) + \varepsilon(x,yez)e = \theta(x_{12})\theta(y_{21})\theta(z_{11}) + \varepsilon(x,yez)e.
$$ 
Comparing we get $\varepsilon(x,y)e\theta(z_{11}) =  \varepsilon(x,yez)e$.
Accordingly, if  $\varepsilon(x,y)\ne 0$ for some $x,y\in B$, then $e\theta(z_{11}) \in Ce$ for every $z\in B$. Consequently,
$$
e\theta(z)e = e\bigl(\theta(z_{11}) + \theta(z_{12}) + \theta(z_{21}) + \theta(z_{22})\bigr)e = e\theta(z_{11})e= e\theta(z_{11}) \in Ce.
$$
Since $\varphi = \theta + \tau$ and $\varphi$ is surjective, this implies $eAe\subseteq Ce$, and hence $eA_C e = Ce$. Similarly,
by making use of \eqref{e21}, we see that $\varepsilon(x,y)\ne 0$ yields $(1-e)A_C (1-e) = C(1-e)$. However, we claim that the conditions $eA_C e = Ce$ and 
$(1-e)A_C (1-e) = C(1-e)$ together imply $A_C\cong M_2(C)$. Indeed, set $u_{11} = e$, $u_{22} = 1-e$. Since $0\ne eA_C(1-e)A_Ce\subseteq Ce$ we can find $a,a'\in A_C$ such that 
$ea(1-e)a'e = e$. Setting $u_{12} = ea(1-e)$ and $u_{21} = (1-e)a'e$ we thus have $u_{12}u_{21} = u_{11}$. As $u_{21}u_{12}\in (1-e)A(1-e)$, there exists $\alpha\in C$ such that $u_{21}u_{12} = \alpha(1-e)$. Multiplying from the left by $e_{12}$ it follows that $ u_{12} =\alpha u_{12}$, and so $\alpha = 1$; that is,  $u_{21}u_{12} = u_{22}$. We have thereby showed that $A_C$ contains a set of $2\times 2$ matrix units $\{u_{ij}\,|\, i,j=1,2\}$ such that $u_{11}A_C u_{11}\cong C$. As it is well known and easy to see, this implies our claim $A_C\cong M_2(C)$. Since $A$ does not satisfy this condition by our assumption, we must have $\varepsilon(x,y)= 0$ for all $x,y\in B$. Thus, \eqref{e12} and \eqref{e21}
reduce to $\theta(x_{12}y_{21}) = \theta(x_{12})\theta(y_{21})$ and $\theta(y_{21}x_{12}) = \theta(y_{21})\theta(x_{12})$. Since $\theta(x_{ij}y_{ij}) = 0 =\theta(x_{ij})\theta(y_{ij})$ trivially holds for $i\ne j$, gathering together all information about $\theta$ we see that it is a superhomomorphism.  

In a similar fashion one shows that $\theta$ is the negative of a superantihomomorphism in the case where (b) holds. 

It is now immediate to check that $\tau = \varphi - \theta$ satisfies $\tau([B,B]_s)  =0$.
\end{proof}

\begin{remark} \label{rem1}
It is easy to see that $\theta$ is injective. But we cannot say much about its range, not even in the case of trivial idempotents. 
See, e.g., \cite[Example 6.10]{FIbook}.
\end{remark}

\begin{remark} \label{rem111}
In the setting studied in Theorem \ref{TJerry}, $C_0$ coincides with $C$, and hence $A_{C} = A_{C_0}$. 
%This is the reason why the stamenent of the result in the next section is apparently different.
\end{remark}

In results on (usual) Lie isomorphisms of prime algebras there is no need to exclude some special types of algebras. The next example justifies the exclusion of the $2\times2$ matrix algebra in Theorem \ref{TJerry}.

\begin{example} \label{Exe}
Let $A=M(1\,|\,1)$. Then the extended centroid $C$ of $A$ coincides with $F$, and $A=A_C$.
%Consider $A$ as a superalgebra with respect to the grading induced by the matrix unit $e=e_{11}$.
 Note that $\varphi:A\to A$ defined by
$$
\varphi\left(\left[\begin{matrix} x & y \cr z & w \cr \end{matrix}\right]\right)=  \left[\begin{matrix} 2x & 2y \cr z & x+w \cr \end{matrix}\right]
$$
is a Lie superautomorphism. Suppose  $\varphi$  was of the form  $\varphi = \theta + \tau$ where $\theta$ and $\tau$ are as in Theorem \ref{TJerry}.  As $\tau$ is a central map vanishing on $[A,A]_s$,
we have
 $$
\tau\left(\left[\begin{matrix} x & y \cr z & w \cr \end{matrix}\right]\right)=  \left[\begin{matrix} c(x-w) & 0 \cr 0 & c(x-w) \cr \end{matrix}\right]
$$
for some $c\in F$, and hence
$$
\theta\left(\left[\begin{matrix} x & y \cr z & w \cr \end{matrix}\right]\right)=  \left[\begin{matrix} (2- c)x+ cw & 2y \cr z & (1-c)x + (1+c)w  \cr \end{matrix}\right].
$$
However, $\theta(e_{12}e_{21})$ is equal neither to $\theta(e_{12})\theta(e_{21})$ nor to  $\theta(e_{21})\theta(e_{12})$, a contradiction.
\end{example}

\section{Second case: grading induced by an $X$-outer automorphism $\sigma$} \label{secsecond}

The result in this section will be obtained as an  application of the theory of functional identities.  We begin by introducing the necessary notation needed for dealing with functional identities.

Let $A$ be an algebra, and let  $x_1,\ldots,x_d\in A$. For $1\le i\le d$ we write 
$$\overline{x}_d^i =  (x_1,\ldots,x_{i-1},x_{i+1},\ldots,x_d) \in A^{d-1}=A\times\ldots\times A,$$
 and for $1 \le i < j \le d$ we write 
$$\overline{x}_d^{ij} =\overline{x}_d^{ji} =  (x_1,\ldots,x_{i-1},x_{i+1},\ldots,x_{j-1},x_{j+1},\ldots,x_d)\in A^{d-2}.$$
We will consider functions defined on $A^{d-1}$ and $A^{d-2}$. We identify a function defined on $A^{0}$ by a fixed  element from the range of this function. 

The following result, which is a very special case of \cite[Theorem 1.2]{GFI}, will be used in our proof. 

\begin{theorem} \cite{GFI} \label{Tgfi}
Let $A$ be a non-GPI prime algebra, let $Q$ be its maximal left algebra of quotients, let $C$ be the extended centroid of $A$, and let $V$ be a finite-dimensional subspace of the vector space $Q$ over $C$. Further, let $\sigma$ be be an $X$-outer automorphism of $A$, let $d\ge 2$, and let $E_i,G_i,F_j,H_j:A^{d-1}\to Q$, $1\le i,j\le d$, be functions such that
\begin{align*}
\begin{split}
\sum_{i=1}^d E_i(\overline{x}_d^i)x_i +  \sum_{i=1}^d  G_i(\overline{x}_d^i)x_i^\sigma + \sum_{j=1}^d x_jF_j(\overline{x}_d^j) +  \sum_{j=1}^d  x_j^\sigma H_j(\overline{x}_d^j)\in V
\end{split}
\end{align*}
for all $x_1,\ldots,x_d\in A$. Then there exist unique functions 
\begin{eqnarray*}
&\mbox{$p_{ij},q_{ij},r_{ij},s_{ij}:A^{d-2}\to Q$, $1\le i,j\le d$, $i\ne j$, and}\\
&\mbox{$\lambda_i,\mu_i: A^{d-1}\to C$, $1\le i\le d$,}
\end{eqnarray*}
 such that
\begin{align*}
\begin{split}
E_i(\overline{x}_d^i) & = \sum_{j=1\atop j\not=i}^d x_j p_{ij}(\overline{x}_d^{ij})  + \sum_{j=1\atop j\not=i}^d x_j^\sigma r_{ij}(\overline{x}_d^{ij})  + \lambda_i(\overline{x}_d^i),\\
G_i(\overline{x}_d^i) & = \sum_{j=1\atop j\not=i}^d x_j q_{ij}(\overline{x}_d^{ij})  + \sum_{j=1\atop j\not=i}^d x_j^\sigma s_{ij}(\overline{x}_d^{ij})  + \mu_i(\overline{x}_d^i),\\
F_j(\overline{x}_d^j) & = -\sum_{i=1\atop i\not=j}^d  p_{ij}(\overline{x}_d^{ij})x_i  - \sum_{i=1\atop i\not=j}^d  q_{ij}(\overline{x}_d^{ij})x_i^\sigma  - \lambda_j(\overline{x}_d^j),\\
H_j(\overline{x}_d^j) & = -\sum_{i=1\atop i\not=j}^d  r_{ij}(\overline{x}_d^{ij})x_i  - \sum_{i=1\atop i\not=j}^d  s_{ij}(\overline{x}_d^{ij})x_i^\sigma  - \mu_j(\overline{x}_d^j)
\end{split}
\end{align*}
for all $x_1,\ldots,x_d\in A$. 
 In particular,
\begin{align*}
\begin{split}
\sum_{i=1}^d E_i(\overline{x}_d^i)x_i +  \sum_{i=1}^d  G_i(\overline{x}_d^i)x_i^\sigma + \sum_{j=1}^d x_jF_j(\overline{x}_d^j) +  \sum_{j=1}^d  x_j^\sigma H_j(\overline{x}_d^j) =0.
\end{split}
\end{align*} 
Moreover, if all $E_i,G_i,F_j,H_j$ are multilinear, then so are $p_{ij},q_{ij},r_{ij},s_{ij},\lambda_i,\mu_i$.
\end{theorem}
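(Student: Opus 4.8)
The plan is to prove the statement as an instance of the general theory of functional identities, with the two essential inputs being that $A$ is non-GPI and that $\sigma$ is $X$-outer. The identity under analysis,
\[
\sum_{i=1}^d E_i(\overline{x}_d^i)x_i + \sum_{i=1}^d G_i(\overline{x}_d^i)x_i^\sigma + \sum_{j=1}^d x_jF_j(\overline{x}_d^j) + \sum_{j=1}^d x_j^\sigma H_j(\overline{x}_d^j) \in V,
\]
is a \emph{generalized functional identity with the automorphism $\sigma$}: certain coefficient functions multiply the variables $x_i$ or $x_i^\sigma$ from the left, others are multiplied from the right. The conclusion asserts that such an identity admits only the \emph{standard solution}, in which every coefficient function is assembled from the lower-arity functions $p_{ij},q_{ij},r_{ij},s_{ij}$ together with the central functions $\lambda_i,\mu_i$. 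The whole scheme mirrors the automorphism-free theory of \cite{FIbook}, so the task is to reduce to that formalism and to supply the extra ingredient accounting for $\sigma$.

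First I would dispose of the finite-dimensional slack $V$. In functional-identity theory a value that lies in a fixed finite-dimensional $C$-subspace for all substitutions behaves, over a non-GPI algebra, like a vanishing value as far as the rigid part of the solution is concerned; thus $V$ can only contribute to the central functions $\lambda_i,\mu_i$, and once the standard form is obtained the expression is in fact identically $0$ (this is the content of the ``in particular'' clause). Next comes the heart of the matter, namely linearizing away the automorphism. Here I would invoke Kharchenko's theorem on automorphisms and generalized polynomial identities \cite{BMMb}: since $\sigma$ is $X$-outer and $A$ is non-GPI, the substitutions $x\mapsto x$ and $x\mapsto x^\sigma$ are independent for the purposes of generalized identities. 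Concretely, no nontrivial relation $\sum_k a_k x b_k + \sum_k c_k x^\sigma d_k = 0$ with fixed $a_k,b_k,c_k,d_k\in Q$ can hold for all $x\in A$. Morally this lets one treat $x_1,\ldots,x_d,x_1^\sigma,\ldots,x_d^\sigma$ as independent variables, so that $A$ equipped with $\sigma$ becomes a $d$-free set in the appropriate $\sigma$-enriched sense (with $d\ge2$ keeping us inside the free range).

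Once freeness is in hand the conclusion is a formal consequence of the defining property of free sets, exactly as in \cite{FIbook}. Feeding the hypothesized identity into the freeness condition produces the functions $p_{ij},q_{ij},r_{ij},s_{ij}$, which record how the left-hand coefficients are built out of the variables, together with the leftover central functions $\lambda_i,\mu_i$; their uniqueness follows because any two solutions differ by a solution of the homogeneous identity, which freeness forces to be trivial. Multilinearity is then tracked through this construction: since each solution function is obtained from $E_i,G_i,F_j,H_j$ by the explicit freeness recipe, it inherits multilinearity whenever the data are multilinear.

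The main obstacle is precisely establishing the independence separating the $x_i$ from the $x_i^\sigma$; everything downstream is essentially formal, whereas the assertion that no nontrivial $\sigma$-relation holds is exactly where both hypotheses are indispensable, and it is there that the genuine content (and the bulk of the technical work underlying \cite{GFI}) resides. Its role is also what forces the exclusions: were $\sigma$ not $X$-outer, it would be conjugation by some $q\in Q_s$, the ``variable'' $x^\sigma = qxq^{-1}$ would cease to be independent of $x$, and the rigid standard-form conclusion would genuinely break down.
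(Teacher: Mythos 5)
The paper offers no proof of this statement to compare yours against: Theorem \ref{Tgfi} is imported wholesale, with the remark that it is ``a very special case of Theorem 1.2'' of \cite{GFI}, and the authors never reprove it. So the only meaningful question is whether your proposal would stand on its own, and there it has a real gap. Your outline correctly identifies the architecture of the argument in \cite{GFI} --- absorb the finite-dimensional slack $V$, use $X$-outerness together with the non-GPI hypothesis to decouple $x$ from $x^\sigma$, then run the freeness formalism of \cite{FIbook} to extract existence, uniqueness, and multilinearity of the $p_{ij},q_{ij},r_{ij},s_{ij},\lambda_i,\mu_i$ --- but the step you label ``the main obstacle'' is the entire content of the theorem, and invoking Kharchenko does not discharge it. Kharchenko's theory (cf.\ \cite[Chapter 7]{BMMb}) concerns generalized polynomial identities with \emph{fixed} coefficients $a_k,b_k,c_k,d_k\in Q$; in the identity at hand the coefficients $E_i,G_i,F_j,H_j$ are arbitrary set-theoretic functions of the remaining variables, with no linearity assumed. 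Passing from the fixed-coefficient independence statement to the conclusion that $A$ is ``$d$-free in the appropriate $\sigma$-enriched sense'' is exactly the inductive, technical core of \cite{GFI}, and your sentence beginning ``morally this lets one treat $x_1,\ldots,x_d,x_1^\sigma,\ldots,x_d^\sigma$ as independent variables'' asserts that conclusion rather than proving it. Everything downstream (the explicit form of the solution, uniqueness via the homogeneous identity, multilinearity) leans on that unestablished freeness.

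A smaller inaccuracy: the independence you quote, that ``no nontrivial relation $\sum_k a_kxb_k+\sum_k c_kx^\sigma d_k=0$ can hold for all $x\in A$,'' is not literally what Kharchenko gives. Such relations do hold when the coefficients are suitably $C$-dependent; the correct statement is that after reducing to $C$-independent coefficients on one side, the complementary coefficients must vanish. This is the form actually used in Remark \ref{remGFI2} of the paper, and getting it right matters because the central functions $\lambda_i,\mu_i$ in the conclusion exist precisely to absorb the $C$-dependence that your formulation forbids.
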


\begin{remark}\label{remGFI}
The uniqueness of the $p_{ij}$'s, $q_{ij}$'s etc. implies the following: If all $F_j,H_j$ are $0$, then all $E_i,G_i$ are $0$. Similarly, if all $E_i,G_i$ are $0$, then  all $F_j,H_j$ are $0$. See also \cite[Theorems 3.1 and 3.2]{GFI}.
\end{remark}

\begin{remark}\label{remGFI2}
Under assumptions of Theorem \ref{Tgfi}, assume that $a,b\in A$ are such that either $a(x+x^\sigma)b = 0$ for all $x\in A$, or  $a(x-x^\sigma)b = 0$ for all $x\in A$. Then $a=0$ or $b=0$. This follows immediately from  
a slightly different version of \cite[Theorem 1.2]{GFI} than the one
stated above. On the other hand,  these are
 very simple examples of  generalized polynomial identities with an $X$-outer automorphism, for which Kharchenko's theory easily gives this conclusion   (cf. \cite[Chapter 7]{BMMb}). This theory is also used in the proof of \cite[Theorem 1.2]{GFI}.
\end{remark}

We will use  Theorem \ref{Tgfi} only for  $d=2$ and $d=3$. The case where  $V =0$ is the one that is most commonly used, but we shall also arrive at other subspaces in the course of the proof of the next theorem.

\begin{theorem} \label{TXout}
Let $A$ and $B$ be  associative superalgebras. Assume  that, as an algebra, $A$ is a non-GPI prime algebra, and assume  the grading of   $A$ is  induced by the $X$-outer automorphism $\sigma$. 
Then every Lie superisomorphism $\varphi:B\to A$ is of the form $\varphi= \theta  + \tau$ where $\theta$ is either a superhomomomorphism  or the negative of a superantihomomorphism from $B$ into $A_{C_0}$, and $\tau$ is a linear map from $B$ into $C_0$  
satisfying $\tau([B,B]_s)  =0$.
\end{theorem}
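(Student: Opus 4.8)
The plan is to encode the Lie superisomorphism condition as a system of generalized functional identities with the automorphism $\sigma$, and then feed it into Theorem \ref{Tgfi}. First I would unpack what the Lie superhomomorphism property means once we work inside $A$ rather than $B$. Since $\varphi$ is a graded bijection, I can transport the multiplication: for homogeneous $x,y\in B$ set $a=\varphi(x)$, $b=\varphi(y)$ and rewrite $\varphi([x,y]_s)=[\varphi(x),\varphi(y)]_s$ as a relation governing the ``pulled-back'' product $a\ast b := \varphi(\varphi^{-1}(a)\varphi^{-1}(b))$ on $A$. Because the grading on $A$ is induced by $\sigma$, the parity of a homogeneous element is detected by $\sigma$ (namely $a^\sigma=\pm a$), so the sign $(-1)^{|x||y|}$ appearing in the supercommutator can be expressed through $\sigma$. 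This is exactly the mechanism that lets the odd/even bookkeeping enter as the extra $x_i^\sigma$-terms in Theorem \ref{Tgfi}.

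The core step is to linearize. I would introduce the biadditive map measuring the failure of $\varphi$ to respect associativity and set up, for three homogeneous arguments, the identity obtained by expanding $[[x,y]_s,z]_s$ via the super-Jacobi identity on one side and via the transported product on the other. After substituting $a=\varphi(x)$, etc., this becomes an expression of the shape
\begin{equation*}
\sum_{i} E_i(\overline{x}_3^i)x_i + \sum_{i} G_i(\overline{x}_3^i)x_i^\sigma + \sum_{j} x_jF_j(\overline{x}_3^j) + \sum_{j} x_j^\sigma H_j(\overline{x}_3^j)=0,
\end{equation*}
with the $E_i,G_i,F_j,H_j$ built from $\varphi$ and the transported product, and a priori valued in $Q$. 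Applying Theorem \ref{Tgfi} with $d=3$ (and $d=2$ for the lower-degree pieces) produces the functions $p_{ij},q_{ij},r_{ij},s_{ij}$ and the scalar functions $\lambda_i,\mu_i\in C$. The plan is to read off from the resulting formulas that $\theta$, the ``associative part'' of $\varphi$ obtained after subtracting the $C_0$-valued correction coming from the $\lambda_i,\mu_i$, multiplies either as $\theta(xy)=\theta(x)\theta(y)$ throughout (superhomomorphism) or with the alternating sign pattern of a negated superantihomomorphism. The parity constraints enforced by the $G_i,H_j$ (the $\sigma$-twisted terms) are precisely what forces the superalgebra signs to come out correctly, so that the two alternatives are the super analogues of (a) and (b) in Proposition \ref{PJerry}.

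The main obstacle I anticipate is the dichotomy: showing that a single global choice of ``homomorphism vs.\ anti-homomorphism'' holds, rather than a mixture that varies with the arguments. Concretely, after Theorem \ref{Tgfi} delivers the $p_{ij},q_{ij},r_{ij},s_{ij}$, one typically finds a factored relation of the form $a\,\theta(z)\,b = a\,\theta(z)^{\star}\,b$ distinguishing the two multiplicative behaviours, and one must use primeness together with Martindale's result on $axb=cxa$ (the relation \eqref{eJer}) and Remark \ref{remGFI2} to conclude that the ambiguity is resolved uniformly. The $\sigma$-twisted version of \eqref{eJer}, controlled by Remark \ref{remGFI2}, is what rules out the degenerate cross-terms that would otherwise allow $\theta$ to behave inconsistently on even versus odd parts. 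A secondary technical point is confirming that the scalar functions land in $C_0$ and not merely in $C$, and that $\tau$ therefore vanishes on odd elements and on all supercommutators; this should follow by matching parities using that $C=C_0\oplus C_1$ is graded and that $\tau$ is a graded map killing $[B,B]_s$. Once the global dichotomy is in hand, verifying that $\theta$ maps into $A_{C_0}$ and assembling $\varphi=\theta+\tau$ is routine.
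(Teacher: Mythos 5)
Your overall strategy matches the paper's: transport the product to $A$ via $F(x,y)=\varphi(\varphi^{-1}(x)\varphi^{-1}(y))$, turn the super-Jacobi identity into a generalized functional identity involving $\sigma$, and apply Theorem \ref{Tgfi} with $d=3$ and then $d=2$ to put $F$ in quasi-polynomial form $F(z,x)=za_{11}x+za_{12}x^\sigma+\cdots$ with coefficients $a_{ij},b_{ij}\in Q$ and $C$-valued maps. That part of the plan is sound and is exactly what the paper does.

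The gap is in how you propose to resolve the dichotomy and pin down the coefficients. The Jacobi-type identity alone determines nothing about the $a_{ij},b_{ij}$; the paper's engine is the \emph{associativity} of the transported product, $F(F(x,y),z)=F(x,F(y,z))$, combined with the relations $F(x_0,y)-F(y,x_0)=[x_0,y]$ and $F(x_1,y_1)+F(y_1,x_1)=x_1y_1+y_1x_1$. Substituting the quasi-polynomial form of $F$ into these and reapplying Theorem \ref{Tgfi} and Remark \ref{remGFI} gives first $a=b+1$ (where $a=\sum a_{ij}$, $b=\sum b_{ij}$) and then $b(z+z^\sigma)a=0$, whence Remark \ref{remGFI2} forces $(a,b)=(1,0)$ or $(0,-1)$ --- that is the global dichotomy. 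Your plan instead invokes Martindale's lemma on $axb=cxa$ (relation \eqref{eJer}), which plays no role here (it belongs to the idempotent case of Section 3), and a ``factored relation $a\,\theta(z)\,b=a\,\theta(z)^{\star}\,b$'' that does not arise in this argument. You also need, and do not mention, the further rounds of associativity computations on mixed parities ($F(F(x_1,y_0),z_0)$, $F(F(x_1,y_0),z_1)$, $F(F(x_1,y_1),z_1)$) that yield $c=0$, $d=-1$, $e=0$, $\omega(A_1)=\rho(A_1)=0$ and $\lambda(A_1,A_0)=0$, as well as the observation that $\omega(y_0)x_1\in A_1$ forces $\omega(A_0)\subseteq C_0$ rather than merely $C$. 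Without these steps the multiplicative behaviour of $\theta$ on odd elements and the fact that $\tau$ takes values in $C_0$ are not established, so the plan as written does not yet close.
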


\begin{proof}
For any $x,y\in A$ we set
$$F(x,y) =
\varphi(\varphi^{-1}(x)\varphi^{-1}(y)).$$
Note that $F$ satisfies 
\begin{equation}\label{Spela1}
F(F(x,y),z) = F(x,F(y,z)) \quad\mbox{for all $x,y,z\in A.$}
\end{equation}
Further, since $\varphi$ is a Lie superisomorphism, we have $F(A_i,A_j)\subseteq A_{i+j}$ for all $i,j\in\mathbb Z_2$,
\begin{equation}\label{Spela2}
F(x_0,y) = F(y,x_0) + [x_0,y]  \quad\mbox{for all $x\in A_0$, $y\in A,$}
\end{equation}
and
\begin{equation}\label{Spela3}
F(x_1,y_1) = - F(y_1,x_1) + x_1y_1 + y_1x_1  \quad\mbox{for all $x_1,y_1\in A_1$.}
\end{equation}

We will now derive a functional identity involving $F$, for which Theorem \ref{Tgfi} is applicable. We begin by noticing that
 $$
(-1)^{|u||w|}[uv,w]_s + (-1)^{|w||v|}[wu,v]_s + (-1)^{|v||u|}[vw,u]_s = 0
$$
holds for all homogeneous $u,v,w\in B$. Consequently,
$$
(-1)^{|u||w|}[\varphi(uv),\varphi(w)]_s + (-1)^{|w||v|}[\varphi(wu),\varphi(v)]_s + (-1)^{|v||u|}[\varphi(vw),\varphi(u)]_s = 0.
$$
  This readily yields
\begin{equation}\label{esu}
(-1)^{|x||z|}[F(x,y),z]_s + (-1)^{|z||y|}[F(z,x),y]_s +
(-1)^{|y||x|}[F(y,z),x]_s = 0
\end{equation}
for all homogeneous $x,y,z\in A$. Let us consider two particular cases of \eqref{esu}. Firstly, if $x=x_0\in A_0$ and $y=y_0\in A_0$, then \eqref{esu} becomes
$$F(x_0,y_0)z + F(z,x_0)y_0 + F(y_0,z)x_0  =
 zF(x_0,y_0) + y_0F(z,x_0) + 
x_0F(y_0,z)$$
for all $z\in A$.
Secondly, if $x=x_1\in A_1$ and $y=y_0\in A_0$, then \eqref{esu} becomes
$$
F(x_1,y_0)z^{\sigma} + F(z,x_1)y_0 + F(y_0,z)x_1 
=zF(x_1,y_0) + y_0F(z,x_1) + 
x_1F(y_0,z^{\sigma})
$$
for all $z\in A$. Adding together these two identities we obtain  
\begin{align*}
&F(x_0,y_0)z  + F(x_1,y_0)z^{\sigma} + F(z,x)y_0 + F(y_0,z)x \\
=& zF(x,y_0) + y_0F(z,x) + x_0F(y_0,z) + x_1F(y_0,z^{\sigma})
\end{align*}
for all $z\in A$, $x_0,y_0\in A_0$, $x_1\in A_1$, where $x=x_0 + x_1$. Let us replace $y_0$ by $y+y^\sigma$ in this identity, and similarly,
$x_0$ by $\frac{1}{2}(x+x^\sigma)$ and $x_1$ by $\frac{1}{2}(x-x^\sigma)$. Then we get
\begin{align*}
&\frac{1}{2}F(x+x^\sigma,y+y^\sigma)z  + \frac{1}{2}F(x-x^\sigma,y+y^\sigma)z^{\sigma} + F(z,x)y\\
 +&  F(z,x)y^\sigma + F(y+y^\sigma,z)x - zF(x,y+y^\sigma) -yF(z,x) - y^\sigma F(z,x)\\
  -& \frac{1}{2}xF(y+y^\sigma,z+z^\sigma) -\frac{1}{2}x^\sigma F(y+y^\sigma,z - z^{\sigma})  = 0
\end{align*}
for all $x,y,z\in A$. This is a type of a functional identity that is treated in Theorem \ref{Tgfi}. We shall not  need  the full force of this theorem.  Let us concentrate
only on terms  $F(z,x)y$ and $-yF(z,x)$ appearing in the identity. Theorem  \ref{Tgfi} tells us that, on the one hand, we have
$$
F(z,x) = xp_{1}(z) + zp_{2}(x) + x^\sigma r_{1}(z) + z^\sigma r_{2}(x) + \lambda(z,x),
$$
and, on the other hand, we have
$$
F(z,x) = p_{1}'(z)x + p_{2}'(x)z +  q_{1}(z)x^\sigma +  q_{2}(x)z^\sigma + \lambda(z,x),
$$
where $p_{i},p_i',r_{i},q_{i}:A\to Q$ are linear maps and $\lambda:A^2 \to C$ is a bilinear map. Comparing both expressions we get
$$
xp_{1}(z) + zp_{2}(x) + x^\sigma r_{1}(z) + z^\sigma r_{2}(x) =  p_{1}'(z)x + p_{2}'(x)z +  q_{1}(z)x^\sigma +  q_{2}(x)z^\sigma.
$$
We may now use Theorem \ref{Tgfi} once again, this time for $d=2$. Hence we see that, in particular,  $p_{1}$ can be expressed as  $p_{1}(z) = a_1z+ a_1'z^\sigma + \gamma_1(z)$ for some $a_1,a_1'\in Q$
and a linear map $\gamma_1:A\to C$. Similarly we can express other functions. Hence  we can conclude that there  exist 
$a_{ij},b_{ij}\in Q$,  linear maps $\lambda_i,\mu_i:A\to C$ and a bilinear map $\lambda:A^2\to C$ such that
\begin{align} 
F(z,x) &= za_{11}x + za_{12}x^\sigma + z^\sigma a_{21}x + z^\sigma a_{22}x^\sigma  \nonumber \\
&+ xb_{11}z + xb_{12}z^\sigma + x^\sigma b_{21}z + x^\sigma b_{22}z^\sigma  \label{spela4}\\
&+ \lambda_1(x)z+\lambda_2(x)z^\sigma + \mu_1(z)x + \mu_2(z)x^\sigma +  \lambda(z,x) \nonumber
\end{align}
for all $z,x\in A$.
Setting $a= a_{11} + a_{12}+ a_{21} + a_{22}$, $b= b_{11} + b_{12}+ b_{21} + b_{22}$, $\omega = \lambda_1 + \lambda_2$ and $\omega' = \mu_1+\mu_2$ we get
\begin{equation*}%\label{dod}
F(z_0,x_0) = z_0ax_0 + x_0bz_0 + \omega(x_0)z_0 + \omega'(z_0)x_0 + \lambda(z_0,x_0)
\end{equation*}
for all $z_0,x_0\in A_0$. Since $F(z_0,x_0) - F(x_0,z_0) = z_0x_0 - x_0z_0$ by \eqref{Spela2}, we thus have
$$
z_0(a-b-1)x_0 - x_0(a-b-1)z_0 + (\omega - \omega')(x_0)z_0 - (\omega - \omega')(z_0)x_0\in C. 
$$
Writing $z+z^\sigma$ for $z_0$ and  $x+x^\sigma$ for $x_0$   we obtain
\begin{align*}
&\bigl(z_0(a-b-1)  - (\omega - \omega')(z_0)\bigr)x + \bigl( z_0(a-b-1)- (\omega - \omega')(z_0)\bigr)x^\sigma \\ 
- & \bigl(x_0(a-b-1) - (\omega - \omega')(x_0)\bigr)z - \bigl(x_0(a-b-1) 
 - (\omega - \omega')(x_0)\bigr)z^\sigma 
\in C.
\end{align*}
Note that we have  arrived at a situation considered in Remark \ref{remGFI}. Hence it follows that $z_0(a-b-1) = (\omega - \omega')(z_0)\in C$, i.e.,
$$z(a-b-1) + z^\sigma(a-b-1)\in C.$$
 Using
Remark \ref{remGFI} once again we get $a = b+1$, and hence $\omega = \omega'$ on $A_0$.

Next we have
\begin{align*}
&F(F(z_0,y_0),x_0)\\
 =& \bigl(z_0ay_0 + y_0bz_0 + \omega(y_0)z_0 + \omega(z_0)y_0 + \lambda(z_0,y_0)\bigr)ax_0 \\
 +& x_0b\bigl(z_0ay_0 + y_0bz_0 + \omega(y_0)z_0 + \omega(z_0)y_0 + \lambda(z_0,y_0)\bigr)\\
  +& \omega(x_0)\bigl(z_0ay_0 + y_0bz_0 + \omega(y_0)z_0 + \omega(z_0)y_0 + \lambda(z_0,y_0)\bigr)\\
   +& \omega(F(z_0,y_0))x_0 + \lambda(F(z_0,y_0),x_0),
\end{align*}
and 
\begin{align*}
&F(z_0, F(y_0,x_0))\\
 =& z_0a\bigl(y_0ax_0 + x_0by_0 + \omega(x_0)y_0 + \omega(y_0)x_0 + \lambda(y_0,x_0)\bigr)\\
 + &\bigl(y_0ax_0 + x_0by_0 + \omega(x_0)y_0 + \omega(y_0)x_0 + \lambda(y_0,x_0)\bigr)bz_0\\
  +& \omega(z_0)\bigl(y_0ax_0 + x_0by_0 + \omega(x_0)y_0 + \omega(y_0)x_0 + \lambda(y_0,x_0)\bigr)\\
   +& \omega(F(y_0,x_0))z_0 + \lambda(z_0,F(y_0,x_0)).
\end{align*}
Since $F(F(z_0,y_0),x_0)=F(z_0, F(y_0,x_0))$ by \eqref{Spela1}, comparing both expressions we obtain
\begin{align}\label{nn}
&\Bigl(y_0bz_0a + \lambda(z_0,y_0)a + \omega(F(z_0,y_0)) - \omega(z_0)\omega(y_0) \Bigr) x_0\nonumber \\
+& \bigl(x_0bz_0a - z_0ax_0b\bigr)y_0  \\
-& \Bigl(y_0ax_0b +\lambda(y_0,x_0)b + \omega(F(y_0,x_0)) - \omega(x_0)\omega(y_0) \Bigr) z_0 \nonumber\\
+&  x_0( \lambda(z_0,y_0)b) - z_0(\lambda(y_0,x_0)a)  
 \in C.\nonumber
\end{align}
Substituting $x+x^\sigma$ for $x_0$ etc. we arrive at a functional identity of the type treated in Theorem \ref{Tgfi}. Hence it follows, in particular, that there are
functions $p_i,q_i:A\to Q$ and $\nu:A^2\to C$ such that
$$
\lambda(y+y^\sigma,x+x^\sigma)a = p_1(y)x + p_2(x)y + q_1(x)y^\sigma + q_2(y)x^\sigma  +\nu(x,y).
$$
Thus, $p_1(y)x + p_2(x)y + q_1(x)y^\sigma + q_2(y)x^\sigma$ always lies in the space $V=C+Ca$. From Remark \ref{remGFI} we infer that $p_i$ and $q_i$ are $0$. Consequently,
$\lambda(y+y^\sigma,x+x^\sigma)a = \nu(x,y)\in C$. In the same way we derive from \eqref{nn} that $ \lambda(z_0,y_0)b$ always lies in $C$. We can therefore rewrite \eqref{nn} as 
\begin{align}\label{lll}
&\Bigl(y_0bz_0a + \lambda(z_0,y_0)(a+b) + \omega(F(z_0,y_0)) - \omega(z_0)\omega(y_0) \Bigr) x_0\nonumber\\
+& \bigl(x_0bz_0a - z_0ax_0b\bigr)y_0  \\
-& \Bigl(y_0ax_0b +\lambda(y_0,x_0)(a+b) + \omega(F(y_0,x_0)) - \omega(x_0)\omega(y_0) \Bigr) z_0 \in C.\nonumber
\end{align}
Making the usual substitution $x+x^\sigma$ for $x_0$ etc. we see that Remark \ref{remGFI} can be used. In particular it follows that
$$
(x+x^\sigma)b(z+z^\sigma)a - (z+z^\sigma)a(x+x^\sigma)b =0.
$$
Using Remark \ref{remGFI} again we obtain $b(z+z^\sigma)a =0$, yielding  $a=0$ or $b=0$ by  Remark \ref{remGFI2}.
Since
$a = b+1$ we actually have $a=0$ and $b=-1$, or $a=1$ and $b=0$. We will consider only the first possibility. As we shall see, it will lead to the conclusion that $\varphi$ can be expressed through the negative of a superantihomomorphism. The second possibility where $a=1$ and $b=0$ corresponds to the superhomomorphism case.

Thus, assume that $a=0$ and $b=-1$. Therefore we have
\begin{equation}\label{dod2}
F(y_0,x_0) = - x_0y_0 + \omega(x_0)y_0 + \omega(y_0)x_0 + \lambda(y_0,x_0).
\end{equation}
Next, \eqref{lll} reduces to 
\begin{align*}\label{lll}
&\Bigl(-\lambda(z_0,y_0) + \omega(F(z_0,y_0)) - \omega(z_0)\omega(y_0) \Bigr) x_0\nonumber\\
-& \Bigl(-\lambda(y_0,x_0) + \omega(F(y_0,x_0)) -\omega(x_0)\omega(y_0) \Bigr) z_0 \in C.\nonumber
\end{align*}
A standard application of Remark \ref{remGFI} yields
\begin{equation}\label{ccc}
-\lambda(y_0,x_0) + \omega(F(y_0,x_0)) - \omega(x_0)\omega(y_0)=0.
\end{equation}

By \eqref{Spela2} we have $F(x_1,y_0) - F(y_0,x_1) - x_1y_0 + y_0x_1 =0$ for all $x_1\in A_1$, $y_0\in A_0$. In view of 
\eqref{spela4} we can rewrite this as follows:
\begin{align*}
&  x_1(a_{11}+a_{12}-a_{21} - a_{22} - b_{11} - b_{12} + b_{21} + b_{22} -1)y_0\\ 
+ & y_0(b_{11}-b_{12}+b_{21} - b_{22} - a_{11}+a_{12}-a_{21} + a_{22}+1)x_1\\
+ &(\lambda_1 - \lambda_2 -\mu_1 + \mu_2)(y_0)x_1 + (\mu_1 + \mu_2 -\lambda_1 -\lambda_2)(x_1)y_0 \\
= & \lambda(y_0,x_1)- \lambda(x_1,y_0).
\end{align*} 
We may now apply Remark \ref{remGFI} iteratively, first for $d=3$ and then for $d=2$,  following the already familiar procedure. In particular we then get $(\mu_1 + \mu_2 -\lambda_1 -\lambda_2)(A_1) =0$, showing that $\omega$ and $\omega'$ coincide
 on $A_1$ as well. We also obtain 
\begin{eqnarray}\label{ab1}
%&a_{11}+a_{12}-a_{21} - a_{22} - b_{11} - b_{12} + b_{21} + b_{22} -1 = 0,\nonumber\\
%&b_{11}-b_{12}+b_{21} - b_{22} - a_{11}+a_{12}-a_{21} + a_{22}+1 =0,\nonumber\\
&(\lambda_1 - \lambda_2 -\mu_1 + \mu_2)(A_0)=0.
%&(\mu_1 + \mu_2 -\lambda_1 -\lambda_2)(A_1) =0,\\
%&\lambda(y_0,x_1)= \lambda(x_1,y_0).\label{esym}
\end{eqnarray}

Similarly, applying \eqref{spela4} to  \eqref{Spela3}  we obtain
\begin{align*}
&  x_1(a_{11}-a_{12} -a_{21} + a_{22} + b_{11} - b_{12} - b_{21} + b_{22} -1)y_1\\ 
+ & y_1(b_{11}-b_{12}-b_{21} + b_{22} + a_{11} - a_{12}-a_{21} + a_{22}-1)x_1\\
+ &(\lambda_1 - \lambda_2 +\mu_1 - \mu_2)(y_1)x_1 + (\mu_1 - \mu_2 +\lambda_1 -\lambda_2)(x_1)y_1 \\
= & -\lambda(y_1,x_1)- \lambda(x_1,y_1),
\end{align*} 
which implies
\begin{eqnarray}
 &a_{11}-a_{12} -a_{21} + a_{22} + b_{11} - b_{12} - b_{21} + b_{22} -1 =0,\label{abab}\\
&(\lambda_1 - \lambda_2 +\mu_1 - \mu_2)(A_1)=0.\label{ab2}
%\\&\lambda(y_1,x_1)= - \lambda(x_1,y_1).\label{e1111}
\end{eqnarray}

Let us set $\rho = \lambda_1 - \lambda_2$. From \eqref{ab1} and \eqref{ab2} we see that $\rho$ coincides with $\mu_1 - \mu_2$ on $A_0$, and with  $\mu_2 - \mu_1$ on $A_1$. 
Further, let $c = a_{11} + a_{12} - a_{21} - a_{22}$, and $d =  b_{11} - b_{12} + b_{21} - b_{22}$. By \eqref{spela4} we have
\begin{equation}\label{cd1}
F(x_1,y_0) = x_1cy_0 + y_0dx_1 + \rho(y_0)x_1 + \omega(x_1)y_0 + \lambda(x_1,y_0)
\end{equation}
for all $x_1\in A_1$, $y_0\in A_0$. Since $F(y_0,x_1) = F(x_1,y_0) + [y_0,x_1]$, it follows that
\begin{equation}\label{cd2}
F(y_0,x_1) = x_1(c-1)y_0 + y_0(d+1)x_1 + \rho(y_0)x_1 + \omega(x_1)y_0 + \lambda(x_1,y_0).
\end{equation}
Further, setting $e=a_{11}-a_{12} - a_{21} + a_{22}$, and noticing that  $b_{11} - b_{12} - b_{21} + b_{22} = 1 - e$ by \eqref{abab}, we see from \eqref{spela4} that
\begin{equation}\label{cd3}
F(u_1,z_1) = u_1ez_1 + z_1(1-e)u_1 + \rho(z_1)u_1 - \rho(u_1)z_1 + \lambda(u_1,z_1)
\end{equation}
for all $u_1,z_1\in A_1$.

Let $y_0,z_0\in A_0$ and $x_1\in A_1$. Applying \eqref{cd1} we obtain
\begin{align*}
&F(F(x_1,y_0),z_0)\\
 =& \bigl(x_1cy_0 + y_0dx_1 + \rho(y_0)x_1 + \omega(x_1)y_0 + \lambda(x_1,y_0)  \bigr)c z_0 \\
 +& z_0d\bigl( x_1cy_0 + y_0dx_1 + \rho(y_0)x_1 + \omega(x_1)y_0 + \lambda(x_1,y_0)  \bigr)\\
  +& \rho(z_0)\bigl(x_1cy_0 + y_0dx_1 + \rho(y_0)x_1 + \omega(x_1)y_0 + \lambda(x_1,y_0)  \bigr)\\
   +& \omega(F(x_1,y_0))z_0 + \lambda(F(x_1,y_0),z_0).
\end{align*}
Similarly, using \eqref{dod2}  and \eqref{cd1} we get
\begin{align*}
&F(x_1,F(y_0,z_0))\\
 =& x_1c\bigl(- z_0y_0 + \omega(z_0)y_0 + \omega(y_0)z_0 + \lambda(y_0,z_0)\bigr)\\
  +& \bigl( - z_0y_0 + \omega(z_0)y_0 + \omega(y_0)z_0 + \lambda(y_0,z_0) \bigr)dx_1 \\
  +& \omega(x_1)\bigl(- z_0y_0 + \omega(z_0)y_0 + \omega(y_0)z_0 + \lambda(y_0,z_0)\bigr)\\
   +& \rho(F(y_0,z_0))x_1 + \lambda(x_1,F(y_0,z_0)).
\end{align*}
In view of \eqref{Spela1} we can equate these two expressions. We can now argue similarly as above, when equating $F(F(z_0,y_0),x_0)$ and $F(z_0, F(y_0,x_0))$. The necessary modifications in the argument are quite obvious, and so we just outline the procedure. First one notices that $\lambda(A_1,A_0)d\subseteq C$. Using Remark \ref{remGFI} then one  shows that
$c=0$ and $\omega(A_1)=0$, and that either $d=0$ or $d=-1$. As we shall see, the first possibility cannot occur. To show this, we
let $x_1,z_1\in A_1$ and $y_0\in A_0$,  and use \eqref{cd1} (with $c=0$ and $\omega(A_1) =0$)  and \eqref{cd3}  to obtain
 \begin{align*}
&F(F(x_1,y_0),z_1)\\
 =& \bigl( y_0dx_1 + \rho(y_0)x_1 +  \lambda(x_1,y_0) \bigr)ez_1 \\
 +& z_1(1-e)\bigl(y_0dx_1 + \rho(y_0)x_1 +  \lambda(x_1,y_0) \bigr)\\
  +& \rho(z_1)\bigl(y_0dx_1 + \rho(y_0)x_1 + \lambda(x_1,y_0) \bigr)\\
   -& \rho(F(x_1,y_0))z_1 + \lambda(F(x_1,y_0),z_1).
\end{align*}
Similarly, from  \eqref{cd2}  and \eqref{cd3} we get
 \begin{align*}
&F(x_1,F(y_0,z_1))\\
 =& x_1e\bigl(-z_1y_0 + y_0(d+1)z_1 + \rho(y_0)z_1 + \lambda(z_1,y_0)\bigr)\\
  +& \bigl(-z_1y_0 + y_0(d+1)z_1 + \rho(y_0)z_1 + \lambda(z_1,y_0)\bigr)(1-e)x_1 \\
  +& \rho(x_1)\bigl(-z_1y_0 + y_0(d+1)z_1 + \rho(y_0)z_1 +  \lambda(z_1,y_0)\bigr)\\
   -& \rho(F(y_0,z_1))x_1 + \lambda(x_1,F(y_0,z_1)).
\end{align*}
Equating these two identities (in view of \eqref{Spela1}) and  then arguing in a standard way  we see that $d$ cannot be $0$, thus $d=-1$, and moreover we see that $e=0$ and $\rho(A_1)=0$. Returning back to $F(F(x_1,y_0),z_0) = F(x_1,F(y_0,z_0))$ we now also see that $\rho = \omega$ on $A_0$, and that $\lambda(A_1,A_0)=0$ (since $\omega(F(x_1,y_0)) =0$ as $F(x_1,y_0)\in A_1$). 
%Hence also $\lambda(A_0,A_1)=0$ by \eqref{esym}.

Finally we examine $F(F(x_1,y_1),z_1)=F(x_1, F(y_1,z_1))$. By a now familiar method we obtain $\omega(F(x_1,y_1)) = \lambda(x_1,y_1)$.

Let us now summarize what was  proved. 
By  \eqref{dod2} and \eqref{ccc} we have
\begin{equation}\label{z1}
F(y_0,x_0) = - x_0y_0 + \omega(x_0)y_0 + \omega(y_0)x_0 + \omega(F(y_0,x_0)) - \omega(x_0)\omega(y_0)
\end{equation}
for all $x_0,y_0\in A_0$. Next, \eqref{cd1} reduces to 
\begin{equation}\label{z2b}
F(x_1,y_0) = - y_0x_1 + \omega(y_0)x_1 
\end{equation}
for all $y_0\in A_0$, $x_1\in A_1$,  and therefore, by \eqref{Spela2}, 
\begin{equation}\label{z2}
F(y_0,x_1) =  -x_1y_0 +  \omega(y_0)x_1
\end{equation}
for all $y_0\in A_0$, $x_1\in A_1$. Consequently, $ \omega(y_0)x_1\in A_1$, which clearly implies 
that  $\omega(y_0)$ lies in $C_0$ for every $y_0\in A_0$. 
Finally, \eqref{cd3} reduces to
\begin{equation}\label{z3}
F(u_1,z_1) =  z_1u_1 + \omega(F(u_1,z_1)) 
\end{equation}
for all $u_1,z_1\in A_1$.

Let us now define $\tau:A\to C_0$ and $\theta:B\to A_{C_0}$ by 
$$
\tau(b_0) = \omega(\varphi(b_0)), \quad \tau(b_1) =0,\quad \theta(b_0) = \varphi(b_0) - \tau(b_0),\quad \theta(b_1) = \varphi(b_1)
$$
for all $b_0\in B_0$, $b_1\in B_1$. Recalling that $F(x,y) =
\varphi(\varphi^{-1}(x)\varphi^{-1}(y))$ we see that \eqref{z1}-\eqref{z3} imply that $\theta$ is the negative of a superantihomomorphism. The fact that $\tau$ vanishes on supercommutators then immediately follows.
\end{proof}

\section{Third case: grading induced by the exchange automorphism}

In this section we consider the situation where $A$ is the direct product, $A=U\times U$, of two copies of a unital prime associative algebra $U$, and the grading is induced by the  exchange automorphism: $(u,v)^\sigma = (v,u)$. Thus, $A_0= \{(u,u)\,|\, u\in U\}\cong U$ and $A_1= \{(u,-u)\,|\, u\in U\}$. In the case where $U=M_n(F)$, this superalgebra  is denoted by $Q(n)$. 

It is easy to see that the supercenter of $A$ consists of all elements of the form $(z,z)$ where $z$ is in the center of $U$. Thus, the supercenter of $A$ is contained in $A_0$ and is isomorphic to the center  of $A$. Next we note that $(1,-1)\in A_1$ from which it readily follows that $[A_1,A_1]_s = A_0$. All these imply that there are no nonzero graded linear maps $\tau$ on $A$ with the range in the supercenter and vanishing on   $[A,A]_s$. Therefore, saying that a Lie superautomorphism of $A$ is of standard form simply means that it is either a 
 superautomorphism or the negative of a superantiautomorphism. 
 
 \begin{theorem} \label{Texch}
 Let $U$ be a noncommutative prime associative algebra.
 Consider the algebra $A=U\times U$  as a superalgebra with respect to the grading induced by the exchange automorphism. 
 Then every Lie superautomorphism $\varphi$ of $A$ is either a superautomorphism or the negative of a  superantiautomorphism. 
 \end{theorem}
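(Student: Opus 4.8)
The plan is to exploit the very simple structure of $A=U\times U$. Writing $\delta=(1,-1)\in A_1$, one has $\delta^2=1$, $\delta$ is central in the associative algebra $A$, and every element is uniquely $p+q\delta$ with $p,q\in U\cong A_0$. Under the identifications $A_0\cong U$, $(u,u)\mapsto u$, and $A_1\cong U$, $(u,-u)\mapsto u$, the graded map $\varphi$ is encoded by two linear bijections $\phi,\psi:U\to U$ via $\varphi((u,u))=(\phi(u),\phi(u))$ and $\varphi((u,-u))=(\psi(u),-\psi(u))$. First I would evaluate the Lie super condition on the three kinds of homogeneous pairs. Recalling that on $A_0$ the supercommutator is the ordinary commutator, on $A_0\times A_1$ it is again the commutator, and on $A_1\times A_1$ it is the anticommutator, this yields
$$\phi([u,v])=[\phi(u),\phi(v)],\quad \psi([u,v])=[\phi(u),\psi(v)],\quad \phi(u\circ v)=\psi(u)\circ\psi(v),$$
where $u\circ v=uv+vu$. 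In particular $\phi$ is a Lie automorphism of $U$.

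Next I would normalize. Setting $z=\psi(1)$ and specializing $v=1$ in the second and third identities gives $z\in Z(U)$ and $\phi=z\psi$; surjectivity of $\phi$ forces $zU=U$, and since a nonzero central element of a prime ring is not a zero divisor, $z$ is invertible with $z^{-1}\in Z(U)$. I would then define $J=z^{-1}\psi$, so that $J(1)=1$ and $J$ is bijective. The third identity becomes $J(u\circ v)=J(u)\circ J(v)$, that is, $J$ is a Jordan automorphism of $U$, while the second identity becomes $J([u,v])=z^2[J(u),J(v)]$.

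The key input is then the description of Jordan homomorphisms onto prime rings (Herstein): the Jordan automorphism $J$ of the prime ring $U$ is either an automorphism or an antiautomorphism. If $J$ is an automorphism, then $[J(u),J(v)]=J([u,v])$, and comparison with $J([u,v])=z^2[J(u),J(v)]$ gives $(1-z^2)J([u,v])=0$ for all $u,v$; since $U$ is noncommutative and $1-z^2$ is central in a prime ring, this forces $z^2=1$ (otherwise $1-z^2$ would be a nonzero non-zero-divisor annihilating the nonzero images of commutators). If $J$ is an antiautomorphism the analogous computation gives $[J(u),J(v)]=-J([u,v])$ and hence $z^2=-1$.

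Finally I would reassemble $\varphi$. In the first case $\phi=z^2J=J$ and $\psi=zJ$, and since $\tilde\delta:=\varphi(\delta)=z\delta$ is central with $\tilde\delta^2=z^2=1$, the assignment $p+q\delta\mapsto \phi(p)+\phi(q)\tilde\delta$ is an algebra automorphism of $A$ agreeing with $\varphi$; being graded, $\varphi$ is a superautomorphism. In the second case $z^2=-1$, and a direct check on the four types of homogeneous products shows that $\Psi:=-\varphi$ satisfies $\Psi(xy)=(-1)^{|x||y|}\Psi(y)\Psi(x)$, i.e.\ $-\varphi$ is a superantiautomorphism, so $\varphi$ is its negative. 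I expect the only real content to be the reduction to the Jordan automorphism $J$ together with the invocation of Herstein's theorem; everything else, namely the bracket bookkeeping and the reassembly in the two cases, is routine. The one point to watch is the characteristic hypothesis under which the Herstein-type dichotomy for Jordan maps onto prime rings is available.
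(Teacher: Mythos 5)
Your proof is correct and follows essentially the same route as the paper: decompose $\varphi$ into maps on the two homogeneous components, show that a suitable central multiple of the odd-part map is a Jordan automorphism of $U$, invoke the Herstein--Smiley dichotomy for Jordan homomorphisms onto prime rings, and use noncommutativity of $U$ to force $z^2=\pm 1$. The one (minor) refinement is your observation that $zU=\phi(U)=U$ makes $z$ invertible already in $U$, which sidesteps the paper's technical aside about the range $\lambda^{-1}U$ possibly failing to be a ring; and the characteristic issue you flag at the end is exactly why the paper cites Smiley's extension alongside Herstein's theorem, to cover characteristic $3$.
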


\begin{proof}
Since $\varphi$ is a graded bijective linear map, there exist
 bijective linear maps $\psi,\rho:U\to U$  such that 
 $$\varphi\bigl((u,u)\bigr)= (\psi(u),\psi(u))\quad\mbox{and}\quad\varphi\bigl((u,-u)\bigr)= (\rho(u),-\rho(u)).$$
 Since $(1,-1)$ commutes with every element in $a_0=(u,u)\in A_0$, it readily follows that 
  $\lambda = \rho(1)$ lies in the center of $U$. Of course,  $\lambda$ is nonzero, so it is invertible in the   field of fractions 
  of the center of $U$. Using
  $\varphi(a_1^2) = \varphi(a_1)^2$ with $a_1 = (u,-u)\in A_1$ we see that 
  $ \psi(u^2) = \rho(u)^2.$
   Replacing $u$ by $u+1$  we infer  $\psi(u) = \lambda \rho(u)$. Consequently, 
   $\lambda\rho(u^2) = \rho(u)^2.$
   That is, $u\mapsto \lambda^{-1}\rho(u)$ is a Jordan homomorphism. A well known result by Herstein \cite{Her2} (together with Smiley's extension
   \cite{Sm} covering the 
   characteristic $3$ case)  states that a Jordan homomorphism from a ring onto a prime ring is either a homomorphism or an antihomomorphism. There
   is an apparent technical problem when one wants to apply this theorem to the present setting since the range of our Jordan homomorphism is 
$\lambda^{-1}U$ which may not be a  ring. However, from the proof of this theorem, such as given for example in \cite[pp. 198-199]{FIbook}, it is clear that
the same conclusion holds in this setting. Therefore we have  $\rho(u) = \lambda\theta(u)$, where $\theta$ is either a homomorphism or 
an antihomomorphism. Accordingly, $\psi(u) = \lambda^2\theta(u)$.

Since the restriction of $\varphi$ to $A_0$ is a Lie automorphism, $\psi$ is a Lie automorphism of $U$. From $\psi([u,u']) = [\psi(u),\psi(u')]$ we get
the following: If $\theta$ is a homomorphism, then $(\lambda^4 - \lambda^2)[\theta(u),\theta(u')] =0$, and if  $\theta$ is an  antihomomorphism, 
then $(\lambda^4 + \lambda^2)[\theta(u),\theta(u')] =0$. Since $U$ is assumed to be noncommutative, we have $[\theta(u),\theta(u')]\ne 0$ for some
$u,u'\in U$. Therefore $\lambda^2= 1$ if $\theta$ is a homomorphism, and $\lambda^2= -1$ if $\theta$ is an antihomomorphism.
It can be easily checked that in the first case   
$\varphi$ is a superautomorphism, and in the second case it is the negative of a  superantiautomorphism. 
\end{proof}

The   case where $U$ is commutative must really be excluded.

\begin{example} \label{Exe1}
Let $A=Q(1)$. Pick $\lambda\in F$ with $\lambda\ne 0$ and $\lambda^2 \ne \pm 1$, and define
$\varphi:A\to A$ by 
$$\varphi\bigl((u,v)\bigr) 
=  \Bigl(\frac{\lambda^2 + \lambda}{2}u   + \frac{\lambda^ 2-\lambda}{2}v, \frac{\lambda^2 - \lambda}{2}u   + \frac{\lambda^2+\lambda}{2} v\Bigr).
$$
It is easy to verify that $\varphi$ is a Lie superautomorphism that is not of standard form. 
\end{example}

\section{General central simple associative superalgebras} 
%The next lemma is known even in a more general setting \cite[Lemma 1.5]{M}, but we give a proof for this special case  since it is short and simple.

%\begin{lemma}\label{Lsim}
%Let $A$ be a simple associative superalgebra. If $A$ is not simple as an algebra, then there exists a simple algebra $U$ such that
%$A\cong B\times B$ where  the grading of $B\times B$ is induced by the exchange automorphism. 
%\end{lemma}

%\begin{proof}
%Let $B$ be an ideal of $A$ such that $B\ne 0$ and $B\ne A$. Then $B + B^\sigma$ and $B\cap B^\sigma$ are graded ideals of $A$, and so $B + B^\sigma =A$ and $B\cap B^\sigma =0$. Clearly, this means %that $A$ is isomorphic to the superalgebra $B\times B$ with the exchange automorphism inducing the grading. If $I$ is an ideal of $B$, then $I \oplus I^\sigma$ is a graded ideal of $A$, yielding the %simplicity of $B$.
%\end{proof}

We are now in a position to establish the principal theorem of the paper.

\begin{theorem} \label{Tsimple}
Let $A$ be a central simple associative superalgebra over $F$ such that $\dim_F A \ne 2,4$. Then every Lie superautomorphism $\varphi$ of $A$ is of the form $\varphi= \theta  + \tau$ where $\theta$ is either a superautomorphism   or the negative of a superantiautomorphism of  $A$, and $\tau$ is a linear map from $A$ into $F$  
satisfying $\tau([A,A]_s)  =0$.
\end{theorem}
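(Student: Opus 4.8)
The plan is to reduce the classification of central simple associative superalgebras to the three cases already settled in Theorems \ref{TJerry}, \ref{TXout}, and \ref{Texch}, so the bulk of the work is a structural trichotomy rather than new functional-identity analysis. The key observation is that a central simple superalgebra is determined by the pair $(A,\sigma)$ where $\sigma$ is the order-$2$ automorphism inducing the grading, and that $A$ itself is a central simple associative algebra (possibly after identifying the even center $Z_0$ with $F$). First I would recall that for such $A$, the even part of the center is $F$ and $A$ is simple as a superalgebra; whether $A$ is \emph{simple as an algebra} splits into two possibilities. If $A$ has a nontrivial graded ideal structure — i.e.\ $A\cong U\times U$ with $\sigma$ the exchange — we land squarely in Section~5 and invoke Theorem \ref{Texch}. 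Otherwise $A$ is simple as an algebra, so its extended centroid $C$ equals its center $Z$, we have $A_C=A=A_{C_0}$, $Q_s=A$, and the notions of $X$-inner and inner automorphism coincide.

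Assuming $A$ is simple as an algebra, the dichotomy is whether $\sigma$ is inner or outer. If $\sigma$ is outer (equivalently $X$-outer, since $Q_s=A$), then provided $A$ is non-GPI we apply Theorem \ref{TXout}; here one must note that a central simple algebra of infinite dimension over $F$ is automatically non-GPI, while the finite-dimensional case is handled separately (or the GPI obstruction is cleared by passing to the relevant matrix description). If $\sigma$ is inner, say $x^\sigma=qxq^{-1}$ with $q^2$ central, then the grading is induced by conjugation, and one shows this forces the grading to be induced by an \emph{idempotent}: diagonalizing $\sigma$ (using $\sigma^2=\id$ and $\ch F\ne 2$) splits $A$ into $\pm 1$ eigenspaces, and the associated idempotent $e$ sits in $A$ precisely because, after suitable normalization of $q$, the even and odd parts are of the Peirce form \eqref{e}. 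Then Theorem \ref{TJerry} applies, with the hypothesis $A_C\not\cong C, M_2(C)$ being exactly what the dimension restriction $\dim_F A\ne 2,4$ guarantees: $A_C\cong C$ means $\dim A=1$, and $A_C\cong M_2(C)$ means $\dim_F A=4$ (the excluded cases, illustrated by Examples \ref{Exe} and \ref{Exe1}).

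Once the correct structural case is identified, each cited theorem delivers $\varphi=\theta+\tau$ with $\theta$ a superhomomorphism or the negative of a superantihomomorphism into $A_{C_0}=A$ and $\tau$ landing in $C_0=F$; since $\varphi$ is bijective and $A$ is simple, $\theta$ is forced to be \emph{onto}, hence a superautomorphism or the negative of a superantiautomorphism of $A$ itself, and $\tau([A,A]_s)=0$ as required. I would verify surjectivity of $\theta$ directly: $\theta$ is injective by Remark \ref{rem1}, its image is a nonzero graded (anti)ideal-like subspace, and comparing with the fixed codimension of $\tau$'s range inside $A$ shows the image must be all of $A$.

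The \textbf{main obstacle} I anticipate is the inner case and the non-GPI bookkeeping. Proving that an inner order-$2$ automorphism genuinely yields an idempotent-induced grading requires care with the possibility that $q^2=-1$ (no honest idempotent in $A$), which must be handled by passing to a quadratic extension of $F$ or by recognizing it as the exchange/$Q(n)$ situation in disguise; this is where the dimension-$2$ and dimension-$4$ exceptions really bite, since those are exactly the low-dimensional superalgebras ($F\oplus F$-type and $M(1|1)$, $Q(1)$, Clifford-type) where no nontrivial idempotent or where $A_C\cong M_2(C)$ occurs. Disentangling these small cases cleanly, and confirming that every remaining central simple superalgebra falls into exactly one of the three treated families, is the delicate combinatorial heart of the argument; the rest is assembly.
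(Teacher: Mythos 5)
Your trichotomy (exchange grading, inner/idempotent grading, $X$-outer grading) is the same skeleton the paper uses, and the exchange case and the surjectivity of $\theta$ are essentially right (though for surjectivity the paper does not count codimensions: it observes that the range of $\theta$ contains $\theta(A_1)=\varphi(A_1)=A_1$ and $[\theta(A_0),\theta(A)]=[A_0,A]$, hence is a subalgebra containing $[A,A]$, hence is all of $A$ by Herstein). But there is a genuine gap in how you dispose of the cases that do not fit the three theorems. When $A$ is simple as an algebra over a non-algebraically-closed $F$, two things can go wrong that you acknowledge only in passing and never actually resolve: (i) the odd part $Z_1$ of the center may be nonzero (e.g.\ $M_n(\mathbb C)$ over $\mathbb R$ graded by entrywise conjugation), in which case $\sigma$ is $X$-outer yet $A$ is finite-dimensional, hence GPI, so Theorem \ref{TXout} simply does not apply and there is no ``relevant matrix description'' to pass to over $F$; and (ii) in the inner case $u^2\in F$ need not be a square, so no idempotent $e=\frac12(1-\lambda^{-1}u)$ exists in $A$. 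You propose ``passing to a quadratic extension,'' which is the right instinct, but an extension of scalars produces a decomposition $\overline\varphi=\overline\theta+\overline\tau$ over $\overline F$, and you must then descend: one has to prove that $\overline\tau$ restricted to $A$ takes values in $F$, not merely in $\overline F$. That descent is a real argument, not bookkeeping.

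The paper's route is to prove the theorem first for algebraically closed $F$ (where $Z_1=0$ follows from $z_1^2=\mu^2$, every inner $\sigma$ yields an honest idempotent, and Skolem--Noether forces any outer $\sigma$ to live on an infinite-dimensional, hence non-GPI, algebra), and then to handle general $F$ by tensoring with $\overline F$ and descending. The descent is done by supposing $\tau(a)\in\overline F\setminus F$ for some homogeneous $a$ and expanding $\overline\theta(ab\otimes1)=-\overline\theta(b\otimes1)\overline\theta(a\otimes1)$; linear independence of $1$ and $\tau(a)$ over $F$ then forces $\varphi(b)\in F\varphi(a)+F$ for all $b$, i.e.\ $\dim_FA\le2$, a contradiction. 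You should supply this step (or an equivalent one); without it the inner-but-nonsplit and finite-dimensional-outer cases, which are exactly where the base field matters, remain open.
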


\begin{proof}
We first remark that in the $\dim_F A= 1$ case the theorem trivially holds as the grading is then trivial and we may take $\theta = \id$ and $\tau = \varphi - \id$. We assume from now on that
$\dim_F A >1$.

Let us first consider the case where $A$ is not simple as an algebra. In this case $A$ is of the form treated in the preceding section - this is well-known, but let us give a short proof for the sake of completness.
 Pick an ideal $U$ of $A$ such that $U\ne 0$ and $U\ne A$. Let 
 $\sigma$ be the automorphism of $A$ inducing the grading on $A$.
 Note that $U + U^\sigma$ and $U\cap U^\sigma$ are graded ideals of $A$, and so $U + U^\sigma =A$ and $U\cap U^\sigma =0$. This readily implies  that $A$ is isomorphic to the superalgebra $U\times U$ with the exchange automorphism inducing the grading. If $I$ is an ideal of the algebra $U$, then $I \oplus I^\sigma$ is a graded ideal of $A$, yielding the simplicity of $U$. Since $A$ is central and $\dim_F A\ne 2$, $U$ is noncommutative. The result therefore follows from Theorem \ref{Texch}. Note that $\tau =0$ in this case. 
 
 We assume from now on that $A$ is simple as an algebra. Until further notice we also assume that $F$ is algebraically closed.
 
Let $Z$ be the center of $A$. 
We claim that $Z_1=0$. Indeed, if $z_1\in Z_1$, then $z_1^2\in Z_0=F$. Since $F$ is algebraically closed it follows that $z_1^2 = \mu^2$ for some $\mu\in F$, yielding $z_1 = \pm \mu\in Z_0$ as the center of a unital simple algebra is a field; hence $z_1=0$.  Thus, $Z=Z_0=F$, i.e.,
 $A$ is central also as an algebra  (not only as a superalgebra). 

Suppose that $\sigma$  is inner. Thus, there is $u\in A$ such that $x^\sigma = uxu^{-1}$, and $A_0 = \{x\in A\,|\, ux=xu\}$, 
$A_1 = \{x\in A\,|\, ux=-xu\}$. As $\sigma^2 = \id$,  we have $u^2\in F$, and therefore
$u^2 =\lambda^2$ for some $\lambda\in F$.  Hence $e=\frac{1}{2}(1-\lambda^{-1}u)$ is an idempotent, and one can easily show that $A_0$ and $A_1$ are given as in \eqref{e}, i.e., the grading on $A$ is induced by an idempotent. 
 Since $A$ is, as a central simple unital algebra, equal to $A_C$ (i.e., the extended centroid $C$ of $A$ is just $F$), and since we have assumed that $\dim_F A\ne 1,4$, we are in a position to use  Theorem \ref{TJerry}, which yields the  desired conclusion.  

Since $A$ is a simple unital algebra, the concept of an inner automorphism on $A$ coincides with the concept of an X-inner automorphism on $A$. We may therefore assume that $\sigma$ is X-outer. The Skolem-Noether Theorem tells us that  $A$ is infinite-dimensional. But then, as a central simple algebra, $A$ cannot be a GPI-algebra (this follows easily
from  the description of prime GPI-rings, cf. \cite[Section 6.1]{BMMb}). Applying Theorem \ref{TXout} we see that $\varphi$ is of the desired form in this case as well.

Now let $F$ be an arbitrary field. By $\overline{F}$ we denote its algebraic closure. Consider the $\overline{F}$-algebra $\overline{A} = A\otimes_F \overline{F}$. Clearly, $\overline{A}$ becomes a superalgebra by defining 
$\overline{A}_i = A_i\otimes_F \overline{F}$, $i=0,1$. It is easy to check that  $\overline{A} $ is both central and simple as a superalgebra, and, of course,  $\dim_{\overline{F}} \overline{A}\ne 2,4$.
Further, $\overline{\varphi} = \varphi\otimes \id$ is a Lie superautomorphism of $\overline{A}$. By what we have proved it follows that $\overline{\varphi}= \overline{\theta}  + \overline{\tau}$ where $\overline{\theta}$ is either a superhomomorphism  or the negative of a superantihomomorphism of $\overline{A}$, and $\overline{\tau}$ is a linear map from $\overline{A}$ into $\overline{F}$  
such that $\overline{\tau}([\overline{A},\overline{A}]_s)  =0$.
Let us only consider the case where $\overline{\theta}$ is the negative of a superantihomomorphism. For every $a\in A$ we  have
$\tau(a) = \overline{\tau}(a\otimes 1) \in \overline{F}$. It only  remains is to show that $\tau(a)$ actually  lies in $F$. Indeed, if this was true, then $\theta= \varphi - \tau$ would be the the negative of a superantihomomorphism of $A$. Suppose, on the contrary, that $\tau(a)\in \overline{F}\setminus{F}$ for some $a\in A$.  Without loss of generality we may assume that $a$ is homogeneous.
Let us consider the case where $a\in A_0$; if $a\in A_1$, then the next argument requires just some rather obvious  modifications. For every $b\in A$ we have 
\begin{align*}
&\varphi(ab)\otimes 1 - 1\otimes \tau(ab)  = \overline{\theta}(ab\otimes 1) = - \overline{\theta}(b\otimes 1)\overline{\theta}(a\otimes 1)\\
 =&  - \bigl(\varphi(b)\otimes 1 - 1\otimes \tau(b)\bigr)\bigl(\varphi(a)\otimes 1 - 1\otimes \tau(a)\bigr),
\end{align*}
and hence
\begin{align*}
&\bigl(\varphi(ab) + \varphi(b)\varphi(a)\bigr)\otimes 1 - \varphi(b)\otimes \tau(a)\\
 =& \varphi(a)\otimes \tau(b) + 1\otimes \bigl( \tau(ab) -\tau(b)\tau(a)\bigr).
\end{align*}
Since $\tau(a)$ and $1$ are linearly independent over $F$, it follows, in particular, that $\varphi(b)$ lies in $F\varphi(a) + F$. Since $\varphi$ is surjective, this means that $\dim_F A\le 2$, contrary to the assumption. 

It remains to show that $\theta$ is bijective. Since it is clearly injective (its kernel is a graded ideal), we only need to prove the surjectivity. If $A$ is not simple as an algebra, then
$\theta = \varphi$ and there is nothing to prove. Assume therefore that $A$ is a simple algebra. Since $\tau(A_1)=0$, we have $\theta(A_1)= \varphi(A_1) =A_1$. Next, 
the range of $\theta$ contains $[\theta(A_0),\theta(A)] = [\varphi(A_0),\varphi(A)] =[A_0,A]$. Accordingly, the range of $\theta$ is a subalgebra of $A$  containing $[A,A]$. But then it is equal to $A$ \cite[Corollary 1]{Her1}.

%The range of $\theta$ is a subalgebra of $A$ containing $[A,A]_s$. But the subalgebra generated by $[A,A]_s$ is equal to $A$. 
\end{proof}

{\bf Acknowledgement}. 
The authors are  grateful to the referee for  careful reading of the
paper and the resulting useful remarks.

\end{document}